\numberwithin{equation}{section}
\theoremstyle{plain}
\newtheorem{thm}{Theorem}[section]
\newtheorem{lem}[thm]{Lemma}
\newtheorem{dfn}[thm]{Definition}
\newtheorem{cor}[thm]{Corollary}
\newtheorem{nota}[thm]{Notation}
\newtheorem{prop}[thm]{Proposition}
\newtheorem{rem}[thm]{Remark}
\newtheorem{ex}[thm]{Example}
\newtheorem{headtheorem}{Theorem}
\newtheorem{headexample}{Example}
\DeclareMathOperator{\reg}{reg}
\DeclareMathOperator{\proj}{proj}
\DeclareMathOperator{\spec}{spec}
\DeclareMathOperator{\tor}{Tor}
\DeclareMathOperator{\Ext}{Ext}
\DeclareMathOperator{\Ima}{Im}
\DeclareMathOperator{\Z}{\mathbb{Z}}
\DeclareMathOperator{\f}{\mathbf{f}}
\DeclareMathOperator{\KO}{\textbf{K}}
\DeclareMathOperator{\K}{K}
\DeclareMathOperator{\CE}{\textbf{C}}
\DeclareMathOperator{\N}{\mathbb{N}}
\DeclareMathOperator{\Supp}{ Supp}
\DeclareMathOperator{\Spec}{ Spec}
\DeclareMathOperator{\codim}{ codim}
\DeclareMathOperator{\Coker}{Coker}
\DeclareMathOperator{\HF}{HF}
\DeclareMathOperator{\Jac}{Jac}
\DeclareMathOperator{\mm}{\mathfrak{m}}
\begin{document}
\title[Multigraded regularity of Complete Intersections ]{Multigraded regularity of complete intersections}
\author{Marc Chardin}
\address{Institut de math\'ematiques de Jussieu, CNRS \& Sorbonne Universit\'e, 4 place Jussieu, 75005 Paris , France}
\email{marc.chardin@imj-prg.fr}

\author{Navid Nemati}
\address{ Universit\'{e} C\^{o}te d'Azur, Inria, 2004 route des Lucioles, 06902 Sophia Antipolis, France}
\email{navid.nemati@inria.fr}

\subjclass[2010]{13D02, 13D40, 14B15, 13D45}
\keywords{Multigraded regularity, Hilbert function, multiprojective space, complete intersection}
\begin{abstract}
$V$ is a complete intersection scheme in a multiprojective space if it can be defined by an ideal $I$ with as many generators as $\codim(V)$. We investigate the multigraded regularity of complete intersections scheme in $\mathbb{P}^n\times \mathbb{P}^m$. We explicitly compute many values of the Hilbert functions of $0$-dimensional complete intersections. We show that these values only depend upon $n,m$, and the bidegrees of the generators of $I$.  As a result, we provide a sharp upper bound for the multigraded regularity of $0$-dimensional complete intersections. 
\begin{comment}
 Also, we state a duality between the graded Betti numbers of $I_V$. 
\end{comment}
\end{abstract}
\maketitle
\section{Introduction}
\noindent The theory of syzygies offers a microscope for looking at systems of equations. Castelnuovo-Mumford regularity is an important invariant in commutative algebra and algebraic geometry which is strongly related to syzygies. D. Eisenbud and S. Goto in \cite{EisenbudGoto} showed that Castelnuovo-Mumford regularity can be obtained from the minimal free resolution. The minimal free resolution was first introduced by Hilbert to study Hilbert function. It is one of the finest invariant that we can associate with a finitely generated graded module M over a polynomial ring. 
Castelnuovo-Mumford regularity measures the 
maximum degree of the syzygies and provides a quantitative version of Serre vanishing 
theorem for the associated sheaf. In particular, it  bounds the largest degree of the 
minimal generators and the smallest twist for which the sheaf is generated by its global 
sections. It has been used as a measure for the complexity of computational problems in 
algebraic geometry and commutative algebra. The two most frequent definitions of $
\mathbb{Z}$-graded Castelnuovo-Mumford regularity are the one in terms of graded 
Betti numbers and the one using local cohomology.

An extension of Castelnuovo-Mumford regularity for the multigraded setting in a special case  was first introduced by Hoffman and Wang in \cite{Hoffman-Wang}.  Later by Maclagan and Smith in \cite{Maclagen-Smith}, and Botbol and Chardin in \cite{Botbol-Chardin} in a more general 
setting.   The main motivation for studying regularity over multigraded polynomial rings 
was from toric geometry. Maclagan and Smith \cite{Maclagen-Smith} developed a multigraded theory of regularity of sheaves on a simplicial toric variety $\mathcal{X}$ with an algebraic variant defined in terms of the vanishing of graded pieces of $H^i_{B}(M)$, the $i$-th local cohomology module of $M$. Here $B$ is the irrelevant ideal of the homogeneous  coordinate ring of $\mathcal{X}$. 

The conceptual difficulty of computing multigraded regularity lies in the simple fact that bounded subsets of $\mathbb{Z}^r$ need not have single maximal or minimal elements. This makes it very hard to capture the vanishing or non-vanishing of the multigraded pieces of local cohomology.  
One can define a complete intersection in projective space as a subscheme defined by as many forms as its codimension. In this case, it corresponds to complete intersection homogeneous ideals; therefore many homological invariants are determined by the degrees of the forms.  One natural question that may arise is to ask the same question for the complete intersection schemes in a product of projective spaces.

Proposition 6.7 in \cite{Maclagen-Smith} implies that  the multigraded graded regularity a $0$-dimensional schemes is the same as the stabilization region of the Hilbert functions. Understanding the Hilbert functions and the minimal free resolutions of the coordinate rings of points in multiprojective space is included among the list of open problems in commutative algebra found in the survey article of Peeva-Stillman\cite{Peeva-Stillman2}.

Based on the above motivations, our main concern in this article is to study multigraded regularity and bigraded Hilbert function complete intersection points in $\mathbb{P}^n\times \mathbb{P}^m$. 
In Section \ref{sec-bigraded}, we start by setting our notation and define multigraded regularity. We show that if the ring is bigraded, then  one can consider an alternative way to define the multigraded regularity (see Theorem \ref{Bregularity}).

In section \ref{sec-CI}, we study the multigraded regularity of general complete intersection schemes in $\mathbb{P}^n\times \mathbb{P}^m$. A scheme $V\subset \mathbb{P}^n\times \mathbb{P}^m$ is complete intersection if it can be defined by as many forms as its codimension. Our motivation is to show which homological invariants are determined by the degrees of these forms.

In Section \ref{sec:points}, we  focus on the case of complete intersection points in $\mathbb{P}^n\times \mathbb{P}^m$. 
The main result of this section is Theorem \ref{Hilbert points}. For a complete intersection scheme of point $V\subseteq \mathbb{P}^n\times \mathbb{P}^m
$ defined by $f_1,\dots, f_{n+m}$, many values of its bigraded Hilbert function are independent from the choices of $f_i$'s and they only depend upon their bidegrees. Moreover, in some regions, there is a duality among these values. 
\begin{headexample}(Example \ref{example})
Let $S=k[x_0,x_1,x_2, y_0,y_1,y_2]$, and $I=(f_1,\dots,f_4)$ is generated by $4$ bigraded forms of bidegree $(2,2)$ such that $\proj(S/I)=V$ is complete intersection scheme of points . For any $\mu \notin (2,6)+(-\N,\N)\cup (6,2)+ (\N,-N) $, the Hilbert function of $V$ at $\mu$ is independent from the choices of $f_i$'s and can be computed explicitly via Theorem  \ref{Hilbert points}.
\end{headexample}
\begin{comment}
In addition, similar to the case of complete intersection in projective spaces, we have a duality among some of the bigraded Betti numbers. 
\begin{headtheorem}(Theorem \ref{Betti})
Let $S=k[x_0,\dots,x_n,y_0,\dots,y_m]$ be a bigraded polynomial ring where $\deg(x_i)=(1,0)$ and $\deg(y_i)=(0,1)$.  Assume $V$ be a complete intersection scheme of points defined by $I=(f_1,\dots,f_{n+m})$ with $\deg(f_i)=(d,e)$. 
If $\mu \leq (nd-1,me-1)$ then
$$
\beta_{i,\gamma-\mu}(I_V)= \beta_{m+n+1-i,\mu}(I_V)
$$
where $\gamma:= (nd,me)$.
\end{headtheorem}
\end{comment}
In Section \ref{sec:generic},  we study generic complete intersection points in $\mathbb{P}^n\times \mathbb{P}^m$.  The main Theorem of this section is the following
\begin{headtheorem}(Theorem \ref{thm:generic})
Let $S=k[x_0,\dots,x_n,y_0,\dots,y_m]$ be a bigraded polynomial ring over a field $k$ of characteristics zero. If $I$ is generated by $n+m$ generic forms of bidegree $(d,e)$,  then the scheme $V$ defined by $I$ is a set of reduced points and 
$$
| \lbrace \mu\in \mathbb{N}^2 | \HF_V(\mu)\neq \deg(V)\rbrace| < \infty,
$$
which means the natural projections are one-to-one. 
\end{headtheorem}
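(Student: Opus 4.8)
The plan is to exhibit a single dense open subset of the coefficient space $\big(S_{(d,e)}\big)^{n+m}$ on which all three assertions hold, deriving reducedness, injectivity of the projections, and finiteness of the non-stable locus from separate arguments whose exceptional loci are each proper closed subsets. Throughout I assume $d,e\geq 1$, since otherwise the $f_i$ involve only one set of variables and $V$ is not $0$-dimensional. Write $X=\mathbb{P}^n\times\mathbb{P}^m$ and $M=\dim_k S_{(d,e)}$. For the reducedness, note that since $d,e\geq 1$ the line bundle $\mathcal{O}(d,e)$ is very ample on the smooth variety $X$ (Segre--Veronese), hence base-point free; as $\mathrm{char}\,k=0$, iterated Bertini gives a dense open set of tuples for which each $V(f_1,\dots,f_j)$ is smooth of codimension $j$. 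Since $n+m=\dim X$, the complete intersection $V$ is smooth of dimension $0$, i.e. a finite set of reduced points; I set $N:=\deg V$.

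Next I would prove that both projections are one-to-one by a dimension count on an incidence variety. For $\pi_1$ consider $\mathcal{Z}_1=\{\,((f_i),u,v,v') : v\neq v',\ f_i(u,v)=f_i(u,v')=0\ \forall i\,\}$ inside $\big(S_{(d,e)}\big)^{n+m}\times\mathbb{P}^n\times\big(\mathbb{P}^m\times\mathbb{P}^m\setminus\Delta\big)$. The crucial input is that for $(u,v)\neq(u,v')$ the two evaluation functionals on $S_{(d,e)}$ are linearly independent, because $\mathcal{O}(d,e)$ separates points; thus the fibre of $\mathcal{Z}_1$ over a base point (the base having dimension $n+2m$) is cut out by $2(n+m)$ independent linear conditions, giving $\dim\mathcal{Z}_1=(n+2m)+M(n+m)-2(n+m)=M(n+m)-n$. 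Since $n\geq 1$ this is strictly smaller than $\dim\big(S_{(d,e)}\big)^{n+m}$, so the projection to coefficient space is not dominant and its image lies in a proper closed set $C_1$; off $C_1$ the map $\pi_1\colon V\to\mathbb{P}^n$ is injective. The symmetric count over $\big(\mathbb{P}^n\times\mathbb{P}^n\setminus\Delta\big)\times\mathbb{P}^m$ gives $\dim\mathcal{Z}_2=M(n+m)-m<M(n+m)$ using $m\geq 1$, so $\pi_2$ is injective off a proper closed $C_2$.

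It remains to pass from one-to-one projections to finiteness of the bad locus. If $\pi_1$ is injective the first coordinates $u_1,\dots,u_N\in\mathbb{P}^n$ of the points $p_i=(u_i,v_i)$ are distinct, so for each $i$ one may choose hyperplanes $\ell_{ij}$ of bidegree $(1,0)$ through $u_j$ but not through $u_i$; multiplying $\prod_{j\neq i}\ell_{ij}$ (of bidegree $(N-1,0)$) by $\ell^{\,a-N+1}m^{\,b}$, where $\ell,m$ of bidegrees $(1,0),(0,1)$ are nonzero at $p_i$, produces a bidegree $(a,b)$ form vanishing at every $p_j$, $j\neq i$, and not at $p_i$ as soon as $a\geq N-1$. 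Hence the points impose independent conditions and $\HF_V((a,b))=N$ for all $a\geq N-1$ and all $b\geq 0$; symmetrically $\pi_2$ injective yields $\HF_V((a,b))=N$ for all $b\geq N-1$. The complement in $\N^2$ of the union of these two regions is the finite box $\{0,\dots,N-2\}^2$, which proves $|\{\mu:\HF_V(\mu)\neq\deg V\}|<\infty$; alternatively one may invoke Theorem \ref{Hilbert points} to pin the values and sharpen the region. For the converse interpretation, if say $\pi_1$ is not injective then two points share a $\mathbb{P}^n$-coordinate and every bidegree $(a,0)$ form takes equal values on them, whence $\HF_V((a,0))<N$ for all $a$, an infinite bad set; this is exactly the equivalence recorded by the phrase ``which means the natural projections are one-to-one.''

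The hard part will be the dimension count of the second step: one must verify that the pair of evaluation conditions attached to two points in a common fibre is genuinely independent for \emph{every} such pair — this is where very ampleness of $\mathcal{O}(d,e)$ enters — and that the resulting incidence variety fails to dominate the parameter space, the strict inequalities resting precisely on $n\geq 1$ and $m\geq 1$. By contrast the Bertini reducedness and the point-separation argument are comparatively routine, and the whole conclusion follows on the nonempty intersection of the dense open sets produced by the three steps.
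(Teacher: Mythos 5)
Your proof is correct, but it reaches the crucial intermediate fact --- that for generic $(f_1,\dots,f_{n+m})$ both projections are injective --- by a genuinely different route than the paper. The paper (Theorem \ref{projection generic}) works with the universal zero locus $Z(g_1,\dots,g_{n+m})\subset \mathbb{A}_k^N\times\mathbb{P}^n\times\mathbb{P}^m$ and proves that its projection to $\mathbb{A}_k^N\times\mathbb{P}^n$ is \emph{birational} onto its image, using generic smoothness of that image together with the Euler-type relations among the partial derivatives $\partial G/\partial U_{i,\alpha,\beta}$ supplied by Jouanolou's lemma to solve rationally for the $y$-variables; specializing then makes the fibrewise projection an isomorphism of schemes. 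You instead parametrize the failure of injectivity by the incidence variety $\mathcal{Z}_1$ of pairs of distinct points in a common fibre and show by a dimension count ($\dim\mathcal{Z}_1=M(n+m)-n<M(n+m)$, resting on the fact that very ampleness of $\mathcal{O}(d,e)$ makes the two evaluation functionals independent) that it cannot dominate the coefficient space. Your version is more elementary and yields only set-theoretic injectivity, but since $V$ is already known to be reduced that is all the Hilbert-function statement requires; the paper's version buys the stronger scheme-theoretic isomorphism recorded in Theorem \ref{projection generic}(2). On the final step the paper is terser --- it notes $\HF_V(0,i)=\HF_V(j,0)=\deg(V)$ for $i,j\gg 0$ and leaves the monotonicity of the Hilbert function implicit --- whereas your separating-forms construction makes the stabilization explicit with the bound $a\geq N-1$ or $b\geq N-1$, and you also supply the converse direction justifying the phrase ``which means the natural projections are one-to-one,'' which the paper does not spell out. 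The reducedness step (Bertini via the Segre--Veronese embedding, Proposition \ref{prop:Bertini}) is essentially identical in both; as in the paper, you should run it on the universal family so that the good locus is a single open set in the product of all $n+m$ coefficient spaces rather than one depending on the previously chosen forms.
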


\section{Multigraded regularity in bigraded setting}\label{sec-bigraded}

\begin{nota}\label{notation}
From now on we use the following notation in the rest of the paper. Let $S=k[x_0,\dots,x_n,y_0,\dots,y_m]$ be a bigraded polynomial ring where $\deg(x_i)=(1,0)$ and $\deg(y_i)=(0,1)$. Let  $B_1= (x_0,\dots,x_n)$, and $ B_2=(y_0,\dots,y_m)$ be the irrelevant ideal of $\mathbb{P}^n$ and $\mathbb{P}^m$. Define  $ B:=B_1\cdot B_2$ the irrelevant ideals of $\mathbb{P}^n\times \mathbb{P}^m$, and $\mm= B_1+B_2$ the maximal ideal of $S$ as a standard graded ring.
\end{nota}

\begin{comment}
\begin{nota}
 $S$ is a multigraded polynomial ring and $\mm$ is the unique maximal ideal of $S$. Let $S$ corresponds to $\mathcal{X}= P^{n_1}\times \cdots \times\mathbb{P}^{n_r}$, define $B_1,\dots B_r$, and $B$ as the irrelevant ideals of $\mathbb{P}^{n_1},\dots, \mathbb{P}^{n_r}$, and $\mathcal{X}$.
\end{nota}
\end{comment}
We state the definition of multigraded regularity in the case of bigraded rings; for further reading, we refer readers to \cite{Botbol-Chardin, Maclagen-Smith}.
\begin{dfn}
Let $M$ be a graded $S$-module. The support of the module $M$ is 
$$
\Supp(M):= \lbrace \gamma \vert M_{\gamma}\neq 0\rbrace.
$$
\end{dfn}
\begin{dfn}(Multigraded regularity)
Let $C\subseteq \mm$ be a finitely generated bigraded ideal of $S$. Define for a  
$\gamma\in \Z^2$, $M$ is \textbf{ $(C,\gamma)$-regular} if 
$$
\gamma + \N^2 \cap \cup_i \Supp(H^i_C(M))+\mathcal{F}_{i-1}= \emptyset,
$$
where $\mathcal{F}_{i}:= \lbrace (i-1,0), (i-2,1),\dots, (0,i-1)\rbrace$ for $i>0$ and $\mathcal{F}_0:= \lbrace0\rbrace$, $\mathcal{F}_{-1}= -\mathcal{F}_1$, and $\mathcal{F}_i=0$ for $i<-1$.  $C$-regularity of $M$ is
$$
\reg_C(M) := \lbrace \gamma\in \Z^r \mid \text{M is } (C,\gamma)\text{-regular}\rbrace.
$$
In particular, if $C=B$, $B$-regularity of $M$, $\reg_B(M)$, is called the multigraded regularity of $M$.
\end{dfn}
Our approach for computing multigraded regularity relies on Mayer-Vietoris sequence to relate the various cohomology modules.

\begin{rem}\label{Mayer-Vietoris-exactseq}
Let $S = k[x_1,\dots,x_m,y_1,\dots,y_n]$ be a bigraded polynomial ring. Define  $B_1,B_2,B$ and $\mm$ as in Notation \ref{notation}. Then we have the following complex between local cohomology modules:
$$
\cdots \rightarrow H^{i-1}_B(M)\rightarrow H^i_{\mm}(M)\rightarrow H^i_{B_1}(M)\oplus H^i_{B_2}(M)\rightarrow H^i_{B}(M)\rightarrow \cdots 
$$
\end{rem}
The following Theorem provide an alternative way to define multigraded regularity  in the bigraded setting by knowing $\mm, B_1$, and $B_2$-regularity.
\begin{thm}\label{Bregularity}
Let $S = k[x_1,\dots,x_m,y_1,\dots,y_n]$ be a bigraded polynomial ring and  $M$ is a  graded $S$-module. Adopt  Notations \ref{notation}, then
$$
\reg_B(M) = \reg_{\mm}(M)\cap \reg_{B_1}(M)\cap \reg_{B_2}(M).
$$
\end{thm}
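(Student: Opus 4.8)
The plan is to first rephrase the definition so that regularity becomes the complement of a downward-closed region. For a bigraded ideal $C\subseteq\mm$ set $N_C:=\bigcup_i\left(\Supp(H^i_C(M))+\mathcal{F}_{i-1}-\N^2\right)\subseteq\Z^2$. Unwinding the condition $(\gamma+\N^2)\cap\bigcup_i(\Supp(H^i_C(M))+\mathcal{F}_{i-1})=\emptyset$, one sees that $\gamma$ is $(C,\gamma)$-regular exactly when $\gamma\notin N_C$, so $\reg_C(M)=\Z^2\setminus N_C$ and the asserted identity is equivalent to the single set equality $N_B=N_\mm\cup N_{B_1}\cup N_{B_2}$. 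Both sides are stable under subtracting $\N^2$, and the whole argument is driven by applying $\Supp(-)$ to the Mayer--Vietoris complex of Remark \ref{Mayer-Vietoris-exactseq}, together with the elementary observation that $\mathcal{F}_{i-1}\subseteq\mathcal{F}_i-\N^2$ (each antidiagonal point of level $i-2$ lies coordinatewise below one of level $i-1$); this is what controls the interaction between the one-step index shift built into Mayer--Vietoris and the moving sets $\mathcal{F}_i$.

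For the inclusion $N_B\subseteq N_\mm\cup N_{B_1}\cup N_{B_2}$ (that is, $\reg_B\supseteq\reg_\mm\cap\reg_{B_1}\cap\reg_{B_2}$), I read off from the piece $H^i_{B_1}(M)\oplus H^i_{B_2}(M)\to H^i_B(M)\to H^{i+1}_\mm(M)$ the support bound $\Supp(H^i_B(M))\subseteq\Supp(H^i_{B_1}(M))\cup\Supp(H^i_{B_2}(M))\cup\Supp(H^{i+1}_\mm(M))$. Adding $\mathcal{F}_{i-1}-\N^2$, the first two terms land in $N_{B_1}$ and $N_{B_2}$ verbatim, while for the third I use $\mathcal{F}_{i-1}-\N^2\subseteq\mathcal{F}_i-\N^2$ to absorb it into the index-$(i+1)$ term of $N_\mm$. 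This direction therefore needs only the favorable index shift and no information about the shapes of the modules. For the reverse inclusion I would first dispose of the $B_1$ and $B_2$ parts: the piece $H^i_\mm(M)\to H^i_{B_1}(M)\oplus H^i_{B_2}(M)\to H^i_B(M)$ gives $\Supp(H^i_{B_1}(M))\subseteq\Supp(H^i_\mm(M))\cup\Supp(H^i_B(M))$, and likewise for $B_2$, with no index shift; adding $\mathcal{F}_{i-1}-\N^2$ yields $N_{B_1}\cup N_{B_2}\subseteq N_\mm\cup N_B$. Consequently the entire reverse inclusion reduces to the single statement $N_\mm\subseteq N_B$, i.e. $\reg_B\subseteq\reg_\mm$.

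The heart of the proof is thus $\reg_B\subseteq\reg_\mm$, and here Mayer--Vietoris alone does not suffice. From $H^{i-1}_B(M)\xrightarrow{\delta}H^i_\mm(M)\xrightarrow{\iota}H^i_{B_1}(M)\oplus H^i_{B_2}(M)$, a class in degree $s\in\Supp(H^i_\mm(M))$ either lies in $\Ima\delta$, so $s\in\Supp(H^{i-1}_B(M))$ but now paired with the \emph{unfavorable} shift ($\mathcal{F}_{i-1}$ against the $\mathcal{F}_{i-2}$ available in $N_B$), or maps nontrivially under $\iota$ into $\ker(H^i_{B_1}(M)\oplus H^i_{B_2}(M)\to H^i_B(M))$, hence to a degree that need not lie in $\Supp(H^i_B(M))$. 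To overcome both gaps I would bring in the shapes of the supports: using the composed-functor identity $\Gamma_\mm=\Gamma_{B_1}\Gamma_{B_2}$ and its Grothendieck spectral sequence $H^p_{B_1}(H^q_{B_2}(M))\Rightarrow H^{p+q}_\mm(M)$, one sees that (for the finitely generated modules at hand) $\Supp(H^i_\mm(M))$ is bounded above in \emph{both} coordinates, whereas $\Supp(H^i_{B_1}(M))$ and $\Supp(H^i_{B_2}(M))$ are bounded above only in the first, respectively second, coordinate, since $B_1$ inverts only the $x_j$ and $B_2$ only the $y_j$. The plan is to combine these quadrant shapes with the downward-closedness of $N_B$: because $\Ima\delta$ lands in the bounded-above-in-both region, the single extra antidiagonal point created when passing from $\mathcal{F}_{i-2}$ to $\mathcal{F}_{i-1}$ can be matched to a neighboring cohomological index already present in $N_B$, and in the surviving case the shape bounds force the class back onto a genuine $H^\bullet_B$ contribution. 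Reconciling the one-step shift of the sets $\mathcal{F}_i$ with these quadrant shapes is the step I expect to be the main obstacle; everything else is formal manipulation of supports through the Mayer--Vietoris complex.
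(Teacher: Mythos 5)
Your reduction is sound and, as far as it goes, runs parallel to the paper's own argument: you obtain $\reg_{\mm}(M)\cap\reg_{B_1}(M)\cap\reg_{B_2}(M)\subseteq\reg_B(M)$ from the piece $H^i_{B_1}(M)\oplus H^i_{B_2}(M)\to H^i_B(M)\to H^{i+1}_{\mm}(M)$ of Mayer--Vietoris together with $\mathcal{F}_{i-1}\subseteq\mathcal{F}_i-\N^2$, and you correctly observe that the reverse inclusion reduces, again by Mayer--Vietoris, to the single statement $\reg_B(M)\subseteq\reg_{\mm}(M)$. You also correctly diagnose why Mayer--Vietoris alone cannot prove that statement: the connecting map shifts the cohomological index in the unfavorable direction relative to the shift from $\mathcal{F}_{i-2}$ to $\mathcal{F}_{i-1}$.

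The problem is that $\reg_B(M)\subseteq\reg_{\mm}(M)$ is the entire content of the theorem, and your proposal does not prove it. The paragraph about quadrant shapes and the spectral sequence $H^p_{B_1}(H^q_{B_2}(M))\Rightarrow H^{p+q}_{\mm}(M)$ is a heuristic, not an argument: you never explain how ``the single extra antidiagonal point can be matched to a neighboring cohomological index,'' and the shape information you invoke is too coarse to do it --- knowing that $\Supp(H^i_{\mm}(M))$ is bounded above does not place it inside $\bigcup_{j\le i}\left(\Supp(H^j_B(M))+\mathcal{F}_{j-1}-\N^2\right)$, which is what your formalism requires. You yourself flag this as ``the main obstacle,'' i.e.\ as unresolved. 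The paper closes exactly this gap by a different mechanism, passing through the Koszul side rather than through composed-functor spectral sequences: analyzing the \v{C}ech--Koszul double complex $\CE^{\bullet}_B\KOB(\underline{\mathbf{x}},\underline{\mathbf{y}};M)$ yields $\Supp(\Tor_i^S(M,k))\subseteq\bigcup_{j\le i}\Supp(H^j_B(M))+\mathcal{E}_{i+j}$, where $\mathcal{E}_\bullet$ records the twists in the minimal free resolution of $B$, and combining this with the bound $\Supp(H^i_{\mm}(M))\subseteq\bigcup\Supp(\Tor_{m+n-i}^S(M,k))-(m,n)-\N^2$ from \cite[Corollary 3.12]{Botbol-Chardin} gives precisely $\Supp(H^i_{\mm}(M))+\mathcal{F}_{i-1}\subseteq\bigcup_{j\le i}\Supp(H^j_B(M))+\mathcal{F}_{j-1}-\N^2$. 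Some input of this kind, relating $H^\bullet_{\mm}$ to $H^\bullet_B$ through the Tor modules (or an equivalent duality statement), is indispensable; without it your proof is incomplete at its central step.
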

\begin{proof}
By analyzing the spectral sequences correspond to \v{C}ech-Koszul  double complex $C^{\bullet}_{B}
K_{\bullet}(\underline{\textbf{x}},\underline{\textbf{y}};M)$ we get
$$
\Supp(\tor_i^S(M,k))\subseteq \cup_{j=0}^{i} \Supp(H^j_{B}(M))+ \mathcal{E}_{i+j}
$$
where $\mathcal{E}_{i}$ denote the set of twists of the summands in the $i$-th step of the minimal free resolution of $B$; see \cite[Section 4.2]{Botbol-Chardin} for  more details regarding the relation between $B$-regularity and the support of Tor modules. 
By \cite[Corollary 3.12]{Botbol-Chardin}, for every $i$ we have
\begin{align*}
\Supp(H^i_{\mm}(M))&\subseteq \cup \Supp(\tor_{m+n-i}^S(M,k))+ \Supp(H^{m+n}_{\mm}(S))\\
 & \subseteq \cup \Supp (\tor_{m+n-i}^S(M,k))-(m,n)-\N^2\\
 &\subseteq \cup \cup_{j=0}^{i} \Supp(H^j_{B}(M))+ \mathcal{E}_{m+n-(i-j)}-(m,n)-\N^2\\
 &\subseteq \cup_{j=0}^{i} \Supp(H^j_{B}(M))+ \mathcal{E}_{i-i}-\N^2.
\end{align*}
Therefore
$$
\Supp(H^i_{\mm}(M))+\mathcal{F}_{i-1}\subseteq \cup_{j=0}^{i} \Supp(H^j_{B}(M))+\mathcal{F}_{j-1}-\N^2,
$$
which means 
\begin{equation}\label{regB}
\reg_B(M)\subseteq \reg_{\mm}(M).
\end{equation}

Suppose $\mu\in \reg_B(M)$. By the definition,  $\left(\mu- \mathcal{F}_{i-1}+\N^2\right)\cap \Supp H^i_B(M)= \emptyset$ for every $i$. By \ref{regB}, $\left(\mu- \mathcal{F}_{i-1}+\N^2\right)\cap \Supp H^i_{\mm}(M)=\emptyset$. By using Mayer-Vietoris exact sequence one gets $\left(\mu- \mathcal{F}_{i-1}+\N^2\right)\cap \left(\Supp H^i_{B_1}(M)\cap \Supp H^i_{B_2}(M)\right)=\emptyset$, that implies 
$$
\reg_B(M)\subseteq \reg_{\mm}(M)\cap \reg_{B_1}(M)\cap \reg_{B_2}(M).
$$
Suppose $\mu \in \reg_{\mm}(M)\cap \reg_{B_1}(M)\cap \reg_{B_2}(M)$. By the definition, for every $i$, $\left(\mu- \mathcal{F}_{i-1}+\N^2\right)\cap \left(\Supp H^i_{B_1}(M)\cap \Supp H^i_{B_2}(M)\right)=\emptyset$, and $\left(\mu-\mathcal{F}_i+\N^2\right)\cap \linebreak \Supp H^{i+1}_{\mm}(M)=\emptyset$. Since $ \left( \mu-\mathcal{F}_{i-1}+\N^2\right) \subset \left(\mu-\mathcal{F}_i+\N^2\right)$,  $\left(\mu-\mathcal{F}_{i-1}+\N^2\right)\cap \Supp H^{i+1}_{\mm}(M)= \emptyset$. The assertion follows from Mayer-Vietoris exact sequence and the definition of $\reg_B(M)$.

\end{proof}

\section{Complete intersections in $\mathbb{P}^n\times \mathbb{P}^m$}\label{sec-CI}
We start this section by stating the definition of complete intersection scheme in a product of projective that we are using in this article.

\begin{comment}
\begin{nota}\label{notation}
Let $S=k[x_0,\dots,x_n,y_0,\dots,y_m]$ be a bigraded polynomial ring where $\deg(x_i)=(1,0)$ and $\deg(y_i)=(0,1)$. Define $B_1= (x_0,\dots,x_n), B_2=(y_0,\dots,y_m)$ and $B=(x_0,\dots,x_n)\cdot (y_0,\dots,y_m)$ the irrelevant ideals of $\mathbb{P}^n,\mathbb{P}^m$ and $\mathbb{P}^n\times \mathbb{P}^m$.  
\end{nota}
\end{comment}
\begin{dfn}
 A subscheme $V\subseteq \mathbb{P}^n\times \mathbb{P}^m$, is a \textit{complete intersection} if $V=\proj (S/I)$ where $I$ is generated by $\codim(V)$ bihomogeneous elements.
\end{dfn}
Let $V$ be a complete intersection scheme in a projective space, then there exist ideal $I$ defining $V$ with as many generators as the codimension of $V$. In this case, ideal $I$ is complete intersection, therefore many homological invariants are entirely determined by the degrees of the generators. Let $V= \proj(S/I)\subset \mathbb{P}^n\times \mathbb{P}^m$ be a complete intersection scheme, where $I$ is generated by $\codim(V)$ forms. In this case, $I$ is not complete intersection unless $n,m=1$ in which case $I$ is not the defining ideal of $V$ (it is not $B$-saturated). Although the similar argument would not work in this case, but our goal is to show that many homological invariants are only depend upon the degrees of the generators of $I$.

\begin{prop}\cite[Example 8.4.2]{Fult}\label{degree}
Let $S=k[x_0,\dots,x_n,y_0,\dots,y_m]$ be a bigraded polynomial ring where $\deg(x_i)=(1,0)$ and $\deg(y_i)=(0,1)$. Let $I=(f_1,\dots,f_{n+m})$ generated by $n+m$ forms of degree $(d_i,e_i)$ and $V$ be the scheme defined by $I$. If $I$ is a complete intersection, then
$$
\deg(V)= \sum d_{i_1}\cdots d_{i_n}\cdot e_{j_1}\cdots e_{j_m},
$$ 
where the sum is over all permutations $(d_{i_1},\dots, d_{i_n}, e_{j_1},\dots, e_{j_m})$ of $(1,\dots,n+m)$ with $i_1<i_2 <\cdots < i_n$ and $j_1<j_2<\cdots < j_m$. In particular if $d_i=d$ and $e_i=e$ for all $i$, then 
$$
\deg(V)=\binom{n+m}{n}d^ne^m.
$$
\end{prop}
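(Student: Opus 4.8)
The plan is to identify $\deg(V)$ with an intersection number in the Chow ring of $X := \mathbb{P}^n\times\mathbb{P}^m$ and to read it off from a product of divisor classes; this is precisely the content of the cited \cite[Example 8.4.2]{Fult}, so I would reproduce that argument in our notation.

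First I would recall that $A^*(X)\cong \Z[h_1,h_2]/(h_1^{n+1},h_2^{m+1})$, where $h_1,h_2$ are the pullbacks of the hyperplane classes of the two factors. The top graded piece $A^{n+m}(X)$ is the group of $0$-cycles modulo rational equivalence; it is freely generated by the class $h_1^nh_2^m$ of a reduced point, with $\deg(h_1^nh_2^m)=1$. Each hypersurface $\{f_i=0\}$, being of bidegree $(d_i,e_i)$, represents the divisor class $d_ih_1+e_ih_2\in A^1(X)$.

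The key step is that, because $I$ is a complete intersection, the $n+m$ hypersurfaces meet properly, i.e. their common zero locus $V$ has the expected codimension $n+m$ and so is $0$-dimensional; hence no excess-intersection contribution occurs and the refined intersection product computes the fundamental class of $V$ as the honest product of the divisor classes:
\[
[V] \;=\; \prod_{i=1}^{n+m}\bigl(d_ih_1+e_ih_2\bigr)\ \in\ A^{n+m}(X).
\]
Here $\deg(V)$ --- the length of the $0$-dimensional scheme $V$, equivalently the eventual value of its bigraded Hilbert function --- equals the coefficient of $h_1^nh_2^m$ in this product. Verifying this identity (properness of the intersection and absence of embedded contributions) is the only real obstacle; it is exactly what \cite[Example 8.4.2]{Fult} supplies, and it is where the hypothesis that $I$ is a complete intersection, rather than merely cut out by $n+m$ forms, is used.

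It then remains to extract the relevant coefficient. Expanding the product and discarding every monomial $h_1^ah_2^b$ with $a>n$ or $b>m$ (these vanish since $h_1^{n+1}=h_2^{m+1}=0$), the surviving term $h_1^nh_2^m$ is obtained by choosing, for each of the $n+m$ factors, whether it contributes its $h_1$-part or its $h_2$-part; selecting the $n$ factors indexed by $i_1<\cdots<i_n$ to contribute $d_{i_\bullet}h_1$ and the complementary $m$ factors $j_1<\cdots<j_m$ to contribute $e_{j_\bullet}h_2$ yields the claimed sum. When all $d_i=d$ and $e_i=e$ the product collapses to $(dh_1+eh_2)^{n+m}$, and the binomial theorem gives the coefficient $\binom{n+m}{n}d^ne^m$ of $h_1^nh_2^m$, proving the final assertion. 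Alternatively, one could argue purely algebraically: since the $f_i$ form a regular sequence in $S$, the bigraded Hilbert series of $S/I$ is $\prod_i(1-s^{d_i}t^{e_i})\big/\bigl((1-s)^{n+1}(1-t)^{m+1}\bigr)$, and $\deg(V)$ is the coefficient of $\epsilon^n\delta^m$ in the leading term $\prod_i(d_i\epsilon+e_i\delta)$ of the numerator as $s=1-\epsilon$, $t=1-\delta\to1$; one must take a coefficient rather than a naive limit, since the latter is path-dependent.
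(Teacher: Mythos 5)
The paper offers no proof of its own here---it simply cites Fulton's Example 8.4.2---and your argument is exactly the intersection-theoretic computation that citation refers to: $[V]=\prod_i(d_ih_1+e_ih_2)$ in $A^{n+m}(\mathbb{P}^n\times\mathbb{P}^m)$, with the coefficient of $h_1^nh_2^m$ extracted by the binomial expansion. Your reproduction is correct (and the Hilbert-series aside via the Koszul resolution is a valid alternative), so this matches the paper's approach.
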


\begin{prop}
Let $I=(f_1,\dots,f_r)$ be a bigraded ideal defining a complete 
intersection of codimension $r$ and let $\deg(f_i)= \textbf{d}_i
$. For $\mu \in \cap_i \reg_B(H_i(\KO(\f,S))$, 
$$
\HF_{S/I}(\mu)= P(\mu),
$$
where $P$ is a polynomial that only depends upon $\textbf{d}_1,\dots, \textbf{d}_r$.
\end{prop}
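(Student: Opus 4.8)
The strategy is to read $\HF_{S/I}(\mu)$ off an Euler characteristic and to show that the hypothesis forces every local‑cohomology correction to vanish. Because $V$ is a complete intersection scheme of codimension $r$, the forms $f_1,\dots,f_r$ form a regular sequence on $\mathbb P^n\times\mathbb P^m$, i.e. on $\Spec S$ away from the irrelevant locus $V(B)$; hence $H_0(\KO(\f,S))=S/I$, while for $q\geq 1$ the homology $H_q(\KO(\f,S))$ is annihilated by a power of $B$, that is, it is $B$-torsion (its associated sheaf is $0$). This $B$-torsion property is the substitute, in the multiprojective setting, for the vanishing of higher Koszul homology that one has when $I$ is itself a complete-intersection ideal; it also explains why the hypothesis $\mu\in\cap_q\reg_B(H_q(\KO(\f,S)))$ is the natural one, since for $q\geq1$ it yields $\HF_{H_q}(\mu)=\dim_k H^0_B(H_q)_\mu=0$.

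Next I would introduce the candidate polynomial. Writing $\mathbf d_J=\sum_{j\in J}\mathbf d_j$ and $P_X(\nu)=\binom{\nu_1+n}{n}\binom{\nu_2+m}{m}$ (the Hilbert polynomial of $S$, equivalently $\chi(\mathcal O_{\mathbb P^n\times\mathbb P^m}(\nu))$), set
$$P(\mu)=\sum_{q=0}^{r}(-1)^q\sum_{|J|=q}P_X(\mu-\mathbf d_J),$$
which manifestly depends only on $n,m$ and $\mathbf d_1,\dots,\mathbf d_r$. The free term $K_q=\bigoplus_{|J|=q}S(-\mathbf d_J)$ of the Koszul complex has $\HF_{K_q}(\mu)=\sum_{|J|=q}\HF_S(\mu-\mathbf d_J)$, and the Grothendieck--Serre formula for $S$ with respect to $B$, namely $\HF_S(\nu)-P_X(\nu)=\sum_p(-1)^p\dim_k H^p_B(S)_\nu$ (via the correspondence between $H^\bullet_B$ and sheaf cohomology in \cite{Botbol-Chardin}), lets me trade $\HF_S$ against $P_X$ at the cost of $H^\bullet_B(S)$-terms.

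The main computation is to compare the two spectral sequences of the \v Cech--Koszul double complex $\CEB_B\KO(\f,S)$ in a fixed multidegree $\mu$, exactly as in the proof of Theorem \ref{Bregularity}. One spectral sequence has second page $H^p_B(H_q(\KO(\f,S)))$; by hypothesis every such module vanishes in degree $\mu$, so the abutment vanishes in degree $\mu$. The other has first page $H^p_B(K_q)_\mu=\bigoplus_{|J|=q}H^p_B(S)_{\mu-\mathbf d_J}$, so its Euler characteristic in degree $\mu$ is $\sum_q(-1)^q\sum_{|J|=q}\sum_p(-1)^p\dim_k H^p_B(S)_{\mu-\mathbf d_J}$. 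Equating the two Euler characteristics and substituting the Grothendieck--Serre formula for $S$ gives
$$0=\sum_{q}(-1)^q\sum_{|J|=q}\bigl(\HF_S(\mu-\mathbf d_J)-P_X(\mu-\mathbf d_J)\bigr)=\sum_q(-1)^q\HF_{K_q}(\mu)-P(\mu).$$
Finally, the alternating sum of the $\HF_{K_q}(\mu)$ equals the alternating sum of the $\HF_{H_q}(\mu)$ (Euler characteristic is preserved on passing to homology), and the $q\geq1$ terms vanish by the $B$-torsion/regularity remark above; hence $P(\mu)=\HF_{H_0}(\mu)=\HF_{S/I}(\mu)$, which is the assertion.

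The main obstacle I expect is the bookkeeping of the two spectral sequences: one must verify, from the definition of $(B,\mu)$-regularity together with the shifts $\mathcal F_{i-1}$, that $\mu\in\reg_B(H_q)$ actually forces $H^p_B(H_q)_\mu=0$ for \emph{every} $p$ (not merely in the shifted regions), so that the abutment really vanishes in degree $\mu$, and one must keep the indices and signs of the double complex consistent so that the two Euler characteristics are genuinely equal. A secondary point to pin down is the $B$-torsion of the higher Koszul homology, which should follow from the codimension-$r$ hypothesis by localizing at each point of $\mathbb P^n\times\mathbb P^m$ and using that a sequence of $r$ elements cutting out codimension $r$ is regular in the local ring; the explicit degree count of Proposition \ref{degree} is a useful consistency check that $P$ is the correct polynomial.
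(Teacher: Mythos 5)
Your argument is correct, and its two load-bearing ingredients are exactly the paper's: (i) because the $r$ forms cut out codimension $r$, they form a regular sequence locally off $V(B)$, so $H_q(\KO(\f,S))$ is $B$-torsion for $q\geq 1$ and therefore vanishes in any degree $\mu$ lying in its $B$-regularity region; (ii) the Grothendieck--Serre correction terms $\sum_p(-1)^p\dim H^p_B(-)_\mu$ vanish at such $\mu$. Where you diverge is in the organization. The paper applies the Grothendieck--Serre formula once, directly to $S/I$, to get $HP_{S/I}(\mu)=\HF_{S/I}(\mu)$ for $\mu\in\reg_B(S/I)$, and then simply \emph{defines} $P:=HP_{S/I}$; that $P$ depends only on the bidegrees is read off from the Koszul Euler characteristic $\chi(\mu)=\sum_i(-1)^i\dim(K_i)_\mu$, with no double complex needed. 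You instead posit the explicit polynomial $P(\mu)=\sum_q(-1)^q\sum_{|J|=q}P_X(\mu-\mathbf d_J)$ (with $P_X$ the Hilbert polynomial of $S$) and verify it by comparing Euler characteristics of the two spectral sequences of $\CE^{\bullet}_{B}(\KO_{\bullet}(\f,S))$. That is valid --- the double complex is bounded and each $H^p_B(K_q)_\mu$ is finite-dimensional, so Euler characteristics pass to the abutment --- but the spectral sequence is avoidable: since the higher Koszul homologies are $B$-torsion, their Hilbert polynomials vanish, so $\sum_q(-1)^q HP_{K_q}=HP_{S/I}$ by additivity, and Grothendieck--Serre for $S/I$ alone finishes the argument. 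What your version buys is the closed formula for $P$, which the paper only makes explicit later (the function $\chi$ in part $(1)$ of Theorem \ref{Hilbert points}); what the paper's version buys is brevity. The two obstacles you flagged are both harmless: $\mu\in\reg_B(M)$ does force $H^p_B(M)_\mu=0$ for every $p$, because each shift set $\mathcal{F}_{p-1}$ is contained in $\N^2$ (so in the defining condition one may choose the element of $\N^2$ equal to the shift), and the $B$-torsion claim follows from Krull's height theorem together with the Cohen--Macaulayness of $S_{\mathfrak{p}}$ exactly as you indicate.
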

\begin{proof}
By the Serre Grothendieck formula \cite[Proposition 4.27]{Botbol-Chardin}
\begin{equation}\label{SGF}
HP_{S/I}(\mu)= \HF_{S/I}(\mu) + \sum_i (-1)^i \dim H^i_{B}(S/I)_\mu.
\end{equation}
Hence, if $\mu\in \reg_B(S/I)$ then $HP_{S/I}(\mu)= \HF_{S/I}(\mu)$.  Set  
\begin{equation}\label{definition chi}
\chi(\mu):= \sum_i (-1)^i \dim (H_i(\KO(\f,S))_\mu= \sum_i (-1)^i \dim(K_i)_\mu.
\end{equation}
Note that the second equality shows that $\chi$ is a function that only depends upon the degrees. 
Since $I$ is complete intersection, $H_i(\KO(\f,S)= H^0_B(H_i(\KO(\f,S))$. Hence, $\Supp(H_i(\KO(\f,S))\cap \reg_B(H_i(\KO(\f,S))=\emptyset $. Therefore,   for $\mu \in \cap_{i>0} \reg_B(H_i(\KO(\f,S))$,  $
\chi(\mu)= \HF_{S/I}(\mu).
$
Hence, for $\mu \in \cap_{i} \reg_B(H_i(\KO(\f,S))$, 
$$
HP_{S/I}(\mu)= \HF_{S/I}(\mu)= \chi(\mu).
$$
Setting $P:= HP_{S/I}$ the first equality shows that $\HF_{S/I}(\mu)= P(\mu)$ and the second equality shows this function only depends upon the degrees.
\end{proof}
\begin{prop}\label{Mayer-Vietoris}
With the Notation \ref{notation}, for any bigraded free $S$-module $M$, 
$$
H^{n+m+1}_B(M)\cong H^{n+m+2}_{\mm}(M).
$$
 Furthermore, If $n=m$ then 
$$
H^{n+1}_{B}(M)\cong H^{n+1}_{B_1}(M)\oplus H^{n+1}_{B_2}(M),
$$
else if $n<m$ then, 
$$
H^{n+1}_{B}(M)\cong H^{n+1}_{B_1}(M)\quad \text{and} \quad H^{m+1}_{B}(M)\cong H^{m+1}_{B_2}(M).
$$
\end{prop}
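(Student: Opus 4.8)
The plan is to reduce to $M=S$ and then obtain each isomorphism by reading a single three-term stretch of the Mayer--Vietoris sequence of Remark \ref{Mayer-Vietoris-exactseq}, once I know exactly in which cohomological degrees each of $H^i_{\mm}$, $H^i_{B_1}$, $H^i_{B_2}$ is allowed to be nonzero. Throughout I would assume $n,m\geq 1$, so that $\mathbb{P}^n\times\mathbb{P}^m$ is genuinely biprojective.

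First I would observe that a bigraded free module is a direct sum of twists $S(\mathbf{a})$, and since local cohomology commutes both with arbitrary direct sums and with internal twists, it is enough to prove every isomorphism for $M=S$: twisting only translates the bigrading and never changes the cohomological degree in which a module vanishes. The key input is then the concentration of the three local cohomology modules of $S$. Because the variables split into two blocks, $x_0,\dots,x_n$ is a regular sequence of length $n+1$ generating $B_1$, $y_0,\dots,y_m$ is a regular sequence of length $m+1$ generating $B_2$, and all $n+m+2$ variables form a regular sequence generating $\mm$. As $S$ is Cohen--Macaulay, for each of these ideals the grade equals the number of generators, while the \v{C}ech complex on those generators bounds the cohomological dimension by the same number; hence $H^i_{B_1}(S)=0$ unless $i=n+1$, $H^i_{B_2}(S)=0$ unless $i=m+1$, and $H^i_{\mm}(S)=0$ unless $i=n+m+2$.

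With these vanishings in hand, each assertion is immediate from Remark \ref{Mayer-Vietoris-exactseq} (which is valid for $B$ precisely because $B_1$ and $B_2$ involve disjoint variables, so $B_1\cap B_2=B_1B_2=B$). For the first isomorphism I would look at the stretch
$$
H^{n+m+1}_{B_1}(S)\oplus H^{n+m+1}_{B_2}(S)\to H^{n+m+1}_{B}(S)\to H^{n+m+2}_{\mm}(S)\to H^{n+m+2}_{B_1}(S)\oplus H^{n+m+2}_{B_2}(S);
$$
since $n,m\geq 1$ both outer pairs vanish, forcing $H^{n+m+1}_B(S)\cong H^{n+m+2}_{\mm}(S)$. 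For the case $n=m$ I would instead read the sequence at degree $n+1$: the flanking terms $H^{n+1}_{\mm}(S)$ and $H^{n+2}_{\mm}(S)$ vanish, so $H^{n+1}_{B_1}(S)\oplus H^{n+1}_{B_2}(S)\cong H^{n+1}_{B}(S)$. For $n<m$ I would run the same argument once at degree $n+1$, where now $H^{n+1}_{B_2}(S)=0$ and both adjacent $\mm$-terms vanish, and once at degree $m+1$, where $H^{m+1}_{B_1}(S)=0$ and the adjacent $\mm$-terms vanish, yielding the two stated isomorphisms.

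The only point requiring genuine care is the bookkeeping at the boundary degrees, and in particular the degenerate cases $n=0$ or $m=0$. For instance, in the third statement the vanishing of $H^{m+2}_{\mm}(S)$ uses $n\geq 1$; if $n=0$ this top cohomology survives and the connecting map $H^{m+1}_B(S)\to H^{m+2}_{\mm}(S)$ need no longer be zero, so the isomorphism would weaken to an injection. Recording the standing hypothesis $n,m\geq 1$ therefore closes the argument, and no analytic difficulty remains beyond tracking these indices.
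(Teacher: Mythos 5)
Your proof is correct and follows essentially the same route as the paper's: establish that $H^i_{B_1}$, $H^i_{B_2}$, $H^i_{\mm}$ of a free module are concentrated in degrees $n+1$, $m+1$, $n+m+2$ respectively, then read the isomorphisms off the Mayer--Vietoris sequence of Remark \ref{Mayer-Vietoris-exactseq}. Your extra bookkeeping (reduction to $M=S$ and the observation that the hypotheses $n,m\geq 1$ are genuinely needed for the flanking terms to vanish) is a worthwhile addition that the paper's terse proof leaves implicit.
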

\begin{proof}
Note that $H^{a}_{B_1}(M)=H^{b}_{B_2}(M)=H^{c}_{\mm}(M)=0$ if $a\neq n+1$, $b\neq m+1$ and $c\neq n+m+2$. The Mayer-Vietoris exact sequence
$$
\cdots \rightarrow H^i_{\mm}(M)\rightarrow H^i_{B_1}(M)\oplus H^i_{B_2}(M)\rightarrow H^i_{B}(M)\rightarrow  H^{i+1}_{\mm}(M)\rightarrow \cdots
$$
gives the results.
\end{proof}
%\begin{prop}\cite[Example 8.4.2]{Fult}\label{degree}
%Let $S=k[x_0,\dots,x_n,y_0,\dots,y_m]$ be a bigraded polynomial ring where $\deg(x_i)=(1,0)$ and $\deg(y_i)=(0,1)$. Let $I=(f_1,\dots,f_{n+m})$ generated by $n+m$ forms of degree $(d_i,e_i)$ and $V$ be the scheme defined by $I$.If $I$ be a complete intersection, then
%$$
%\deg(V)= \sum d_{i_1}\cdots d_{i_n}\cdot e_{j_1}\cdots e_{j_m},
%$$ 
%where the sum is over all permutations $(d_{i_1},\dots, d_{i_n}, e_{j_1},\dots, e_{j_m})$ of $(1,\dots,n+m)$ with $i_1<i_2 <\cdots < i_n$ and $j_1<j_2<\cdots < j_m$. In particular if $d_i=d$ and $e_i=e$ for all $i$, then 
%$$
%\deg(V)=\binom{n+m}{n}d^ne^m.
%$$
%\end{prop}
%\begin{cor}
%Let $I\subset S$ be an ideal generated by $n+m$ forms $f_i$ of degree $(d,e)$ and $J$ be its saturation with respect to $B$. If $I$ be a complete intersection and defines points in $\mathbb{P}^n\times \mathbb{P}^m$, then for any $\mu\leq(d,e)$ and $\mu\neq (d,e)$
%$$
%H_{S/I}(\mu)=H_{S/J}(\mu).
%$$
%\end{cor}
%\begin{proof}
%Let $V$ be the scheme of points defined by $J$. Note for any $f\notin I$ there exist $g_2,\dots,g_{n+m}\in I$ of degree $(d,e)$ such that $I'=(f,g_2,\dots,g_{n+m})$ become a complete intersection.   Let $f\in J$ with $\deg(f)=\mu<(d,e)$, then $I'$ vanishes on the points of $V$ i.e. $Z(I')$ contains $V$. By Proposition \ref{degree} this is a contradiction since  $\deg(Z(I'))< \deg(V)$.
%\end{proof}

\begin{thm}\label{main B regularity}
Let $S=k[x_0,\dots,x_n,y_0,\dots,y_m]$ be a bigraded polynomial ring where $\deg(x_i)=(1,0)$ and $\deg(y_i)=(0,1)$. Define $B=(x_0,\dots,x_n)\cdot (y_0,\dots,y_m)$ the irrelevant ideal of $\mathbb{P}^n\times \mathbb{P}^m$. Assume $I=(f_1,\dots,f_{r})$ with  $ \deg(f_i)= (d_i,e_i)$ and $V$ be the scheme  defined by $I$. If $\codim(V)=r$ then
%\begin{itemize}
%\item[$\bullet$] $\Supp(H_i(\KO(\f,S))= \Supp(V_{n+1+i})\cup  \Supp(W_{m+1+i})$,
%\item[$\bullet$] $\Supp(H^0_B(S/I))= \Supp(V_{n+1})\cup  \Supp(W_{m+1}) $ ,
%\item[$\bullet$] $\Supp (H^i_B(S/I))= \Supp(V_{n+1-i})\cup \Supp(W_{m+1-i})$ for $1\leq i\leq r-1$, and 
%\item[$\bullet$] $\Supp(H^r_B(S/I))\subseteq  \Supp(V_{n+1-r})\cup \Supp(W_{m+1-r})\cup  \Supp\left((S/I)_{r\cdot \mathbf{d}-(n+1,m+1)}\right)$
%\end{itemize}
$$\begin{array}{lll}
\bullet &\Supp(H_i(\KO(\f,S))&= \Supp(V_{n+1+i})\cup  \Supp(W_{m+1+i}) \quad \text{for}\,\, i>0\\
\bullet &\Supp (H^i_B(S/I))&= \Supp(V_{n+1-i})\cup \Supp(W_{m+1-i}) \, \text{for} \,\, 0\leq i\leq \dim V,\\
\bullet &\Supp(H^{\dim V+1}_B(S/I))&= \lbrace (\mu,\nu) \vert H^{\dim V}(V, \mathcal{O}_V(\mu,\nu))\neq 0\rbrace\\
&&\subseteq  \Supp(V_{r-m})\cup \Supp(W_{r-n})\cup \{\sigma- \Supp\left(S/I \right)\},
\end{array}$$
where $V_i$ (resp. $W_i$) is a subquotient of $H^{n+1}
_{B_1}(\K_i(\f,S))$ (resp. $H^{m+1}_{B_2}(\K_i(\f,S))$), and $\sigma := \sum_i(d_i,e_i)-(n+1,m+1)$.
\end{thm}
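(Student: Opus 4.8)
The plan is to run everything through one device: the double complex obtained by applying a \v{C}ech complex computing $H^\bullet_B$ to the Koszul complex $\KO(\f,S)$, and to play its two spectral sequences against each other. The structural input that makes the statement work is that, each $f_i$ having positive bidegree, $I\subseteq B_1\cap B_2$, so $V(I)\supseteq V(B_1)\cup V(B_2)=V(B)$. Away from the irrelevant locus the forms cut out $V$ as a codimension-$r$ complete intersection, so $\KO(\f,S)$ is a resolution there and hence $\Supp H_i(\KO(\f,S))\subseteq V(B)$ for every $i>0$: the higher Koszul homology is $B$\emph{-torsion}. This single fact is what later collapses one of the spectral sequences.

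First I would identify the players. Since each $\K_i=\K_i(\f,S)$ is free, Proposition \ref{Mayer-Vietoris} gives $H^q_B(\K_i)=0$ for $q\notin\{n+1,m+1,n+m+1\}$, with values there equal to twists of $N_1:=H^{n+1}_{B_1}(S)$, $N_2:=H^{m+1}_{B_2}(S)$ and $E:=H^{n+m+2}_{\mm}(S)$. Taking local cohomology first therefore concentrates the first spectral sequence on three rows whose $E_2$-terms are the Koszul homologies $V_i:=H_i(\KO(\f,N_1))$, $W_i:=H_i(\KO(\f,N_2))$ and $U_i:=H_i(\KO(\f,E))$; the $V_i,W_i$ are exactly the announced subquotients of $H^{n+1}_{B_1}(\K_i)$ and $H^{m+1}_{B_2}(\K_i)$. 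Taking Koszul homology first gives the second spectral sequence, with $E_2^{p,q}=H^q_B\!\left(H_{r-p}(\KO(\f,S))\right)$; its $p=r$ column is $H^q_B(S/I)$, while for $p<r$ the $B$-torsion observation forces every entry with $q>0$ to vanish, so the abutment in total degree $r-i<r$ is simply $H_i(\KO(\f,S))$ itself.

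Comparing the two abutments is then bookkeeping. In total degree $r-i$ the first spectral sequence sees only $V_{n+1+i}$, $W_{m+1+i}$ and $U_{n+m+1+i}$; since $n+m+1+i>r$ the last is zero, so $H_i(\KO(\f,S))$ is filtered by subquotients of $V_{n+1+i}$ and $W_{m+1+i}$, which is the first bullet. In total degree $r+q$, which hosts $H^q_B(S/I)$, the contributions are $V_{n+1-q}$, $W_{m+1-q}$ and $U_{n+m+1-q}$; for $q\le\dim V=n+m-r$ the index $n+m+1-q$ exceeds $r$ and $U$ drops out, yielding the second bullet, whereas for $q=\dim V+1$ one gets the extra term $U_r$ alongside $V_{r-m}$ and $W_{r-n}$. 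By self-duality of the Koszul complex together with graded local duality ($E$ being a twist of the graded injective hull of the residue field), $U_r=H_r(\KO(\f,E))$ is a twist of the graded dual $(S/I)^\vee$ with $\Supp U_r=\sigma-\Supp(S/I)$, where $\sigma=\sum_i(d_i,e_i)-(n+1,m+1)$; combined with the standard identification $H^{\dim V+1}_B(S/I)_{(\mu,\nu)}\cong H^{\dim V}(V,\mathcal O_V(\mu,\nu))$ between $B$-local and sheaf cohomology on $\proj(S/I)$, this gives the third bullet.

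The main obstacle is that the first spectral sequence is not degenerate: there are a priori differentials $W$-row$\to V$-row, $E$-row$\to W$-row and $E$-row$\to V$-row, so the abutments are only \emph{filtered} by the $V_\ast,W_\ast,U_\ast$, which yields the support inclusions for free but not the asserted equalities in the first two bullets. The way I would force equality is a region argument: the three modules live in disjoint quadrants, $\Supp N_1\subseteq\{a\le-(n+1),\,b\ge0\}$, $\Supp N_2\subseteq\{a\ge0,\,b\le-(m+1)\}$ and $\Supp E\subseteq\{a\le-(n+1),\,b\le-(m+1)\}$, and the Koszul twists only move these by $\sum_{j\in J}(d_j,e_j)\ge0$. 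Hence at any bidegree $\gamma\in\Supp V_{n+1+i}$ with $a<0$ (resp. $\gamma\in\Supp W_{m+1+i}$ with $b<0$) every incident differential has its source or target supported off $\gamma$ and therefore vanishes at $\gamma$, so $\gamma$ survives to $E_\infty$ and lies in $\Supp H_i(\KO(\f,S))$; the same count, together with the Mayer--Vietoris sequence of Remark \ref{Mayer-Vietoris-exactseq} relating $H^\bullet_B,H^\bullet_{B_1},H^\bullet_{B_2},H^\bullet_{\mm}$, lets one treat the $B_1$- and $B_2$-contributions through the (degenerate) single-row complexes $C^\bullet_{B_1}\K_\bullet$ and $C^\bullet_{B_2}\K_\bullet$. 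The genuinely delicate case is the overlap inside the first quadrant $\{a\ge0,\,b\ge0\}$, reachable by both $V$ and $W$ once $\sum d_j\ge n+1$ and $\sum e_j\ge m+1$: there one must show the differential between a purely $B_1$-cohomological and a purely $B_2$-cohomological row dies for bidegree reasons in order to keep equality rather than a mere inclusion. This is precisely the region that the excluded zones of the Hilbert-function statement (cf. Example \ref{example}) are designed to avoid, and it is why the top bullet is stated only as an inclusion, the surviving $E$-row contributing the duality term there.
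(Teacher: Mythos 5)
Your proposal follows essentially the same route as the paper: the \v{C}ech--Koszul double complex $\CE^{\bullet}_{B}(\KO_{\bullet}(\f,S))$, with one spectral sequence collapsed by the $B$-torsionness of the higher Koszul homology and the other reduced via Proposition \ref{Mayer-Vietoris} to the rows $V_\ast$, $W_\ast$ and the dual module $M\cong (S/I)^{\star}(\sigma)$, followed by a comparison of the two abutments. The only real difference is that you make explicit the support-disjointness argument needed to kill the higher differentials between the $V$- and $W$-rows, a point this proof of the paper leaves implicit and only articulates later in Corollary \ref{cor equality H1}.
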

%With the assumptions in the aforementioned Theorem  $H^i_B(S/J)=H^i_B(S/I)$ if $i=1$ else $H^i_B(S/J)=0$. Hence, 
%$$
%\reg_B(S/J)= \lbrace \gamma\in \mathbb{Z}^2 \vert \gamma\notin \Supp(H^1_B(S/J))\rbrace.
%$$
\begin{proof}
Consider the double complex $\CE^{\bullet}_{B}(\KO_{\bullet}(\f,S))$ and suppose $n\leq m$. If we 
start  taking homologies vertically, by \ref{Mayer-Vietoris} in the third page we have:
$$\begin{array}{ccccc}
0&0&\cdots&0&0\\
\vdots&\vdots&\vdots&\vdots&\vdots\\
V_{r}& V_{r-1}&\cdots &V_1&V_0\\
\vdots&\vdots&\vdots&\vdots&\vdots\\
W_{r}& W_{r-1}&\cdots &W_1&W_0\\
\vdots&\vdots&\vdots&\vdots&\vdots\\
M& \cdots&0&0&0, 
\end{array}$$
%$$
%\small{
%\xymatrix{
%0&0&\cdots&0&0\\
%\vdots&\vdots&\vdots&\vdots&\vdots\\
%V_{n+m}& V_{n+m-1}&\cdots &V_1&V_0\\
%\vdots&\vdots&\vdots&\vdots&\vdots\\
%W_{n+m}& W_{n+m-1}&\cdots &W_1&W_0\\
%\vdots&\vdots&\vdots&\vdots&\vdots\\
%H_0^{\star}& H_1^{\star}&0&0&0.
%}
%}
%$$
where 
\begin{align*}
M&\cong H_r(H^{n+m+1}_B(\KO(\f,S))\\
&\cong H_r\left( H^{n+m+2}_{\mm}\left(\KO(\f,S)\right)\right)\\
&\cong H_r
\left(( \KO\left(\f,S\right)\left(\sigma)\right)^{\star}\right)\\
&\cong (S/I)^{\star} \left(\sigma\right).
\end{align*}
% ((n+m)d-(n+1),(n+m)e-(m+1))
%Since $\Supp(V_i)= i\cdot \textbf{d}-(n+1,0)+(-\mathbb{N},
%\mathbb{N})$ and $\Supp(W_j)=j\cdot \textbf{d} -(0,m+1)+
%(\mathbb{N},-\mathbb{N})$, there will be no nonzero 
%map in the spectral sequence among them. In this case, in the  $n+2$-th page we have an induced map $\psi_1: M^{\star} \rightarrow W_{r-n-1}$???, and in the $m+2$-th page we have  an induced map $\psi_2: \ker \psi_1\rightarrow V_{r-m-1}$???. 
If we start taking homology horizontally, in the second page we have 
$$
\begin{array}{cccccc}
\cdots & 0  &   H_{\gamma} & \cdots & H_1  &    
H^0_B(S/I)\\
\cdots&0&0&0&0&H^1_B(S/I)\\
\cdots&0& 0&0&0&\vdots\\
\vdots &\vdots &\vdots&\vdots&\vdots&H^{\dim ( V)+1}_B(S/I)\\
\vdots &\vdots &\vdots&\vdots&\vdots& 0,
\end{array}
$$
where $\gamma := \max \{ 0, r-n-1, r-m-1\} $.
%$$
%\begin{array}{cccc}
%\cdots&0&H_1&H^0_B(S/I)\\
%\cdots&0&0&H^1_B(S/I)\\
%0&0&0&0\\
%\vdots&\vdots&\vdots&\vdots
%\end{array}
%$$
The result is obtained by comparing the two abutments.
%\begin{equation}\label{abute spectral}
%H^1_B(S/J)\cong H^1_B(S/I)\cong \ker (\psi_2)\oplus W_m \oplus V_n .
%\end{equation}
%which proves $(1)$. In particular, for a  $\mu\notin \Supp(V_{n-1}\oplus V_n\oplus W_{m-1}\oplus W_m)$,
%$$
%H^1_B(S/J)_\mu\cong (S/I)_{(n+m)\cdot(d,e)-(n+1,m+1)-\mu}.
%$$ 
%\begin{align*}
% \Supp (\ker \psi)&\subseteq  \Supp(H^{\star}_0)\\
%  &= ((n+m)d-(n+1),(n+m)e-(m+1))+(-\N,-\N) \\ 
%  \Supp(V_{n-1})&\subseteq  ((n-1)d-(n+1),(n-1)e)+(-\N,+\N)\\
%\Supp(W_{m-1})&\subseteq    ((m-1)d,(m-1)e-(m+1))+(+\N,-\N).
%\end{align*}
%for every $\mu \in (nd-n,(n+m)e-m)+\N^2\cup ((n+m)d-n,me-m)+\N^2$, $H^1_{B}(S/I)_\mu=H^1_{B}(S/J)_\mu=0$, therefore $\mu\in \reg_B(S/J)$.
\end{proof}
\begin{dfn}
Let $\textbf{d}=(d,e)\in \mathbb{N}^2$ and $r\in \mathbb{N}$, define
\begin{align*}
v_i &:= i\cdot \mathbf{d}-(n+1,0),\\
w_i&:= i\cdot \mathbf{d}-(0,m+1),\\
\sigma&:= r\cdot \mathbf{d}-(n+1,m+1).
\end{align*}
\end{dfn}
\begin{lem}\label{lem supp homologies}
Let $I=(f_1,\dots,f_r)\subseteq S$ and $\deg(f_i)=\textbf{d}= (d,e)$.  Suppose $V$ is the scheme defined by $I$,  if $\codim(V)=r$ then
\begin{align*}
\Supp(H_i(\KO(\f,S))\subseteq &  v_{n+1+i}+(-\mathbb{N}, \mathbb{N})\cup  w_{m+1+i}+(\mathbb{N},-\mathbb{N}) \quad \text{for}\,\, i>0,\\
\Supp (H^i_B(S/I))\subseteq& v_{n+1-i}+(-\mathbb{N}, \mathbb{N})\cup w_{m+1-i}+(\mathbb{N},-\mathbb{N}) \quad \text{for} \,\, 0\leq i\leq \dim V.
\end{align*}
In addition, the inclusions are sharp.
\end{lem}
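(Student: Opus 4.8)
The plan is to deduce Lemma~\ref{lem supp homologies} directly from Theorem~\ref{main B regularity}, which already identifies the supports of the Koszul homology and of the local cohomology of $S/I$ in terms of the subquotients $V_j$ of $H^{n+1}_{B_1}(\K_j(\f,S))$ and the subquotients $W_j$ of $H^{m+1}_{B_2}(\K_j(\f,S))$. The whole point is that in the present lemma all generators share the same bidegree $\textbf{d}=(d,e)$, so the twists appearing in the Koszul complex become completely explicit, and I only need to compute where the relevant local cohomology modules of the free modules $\K_j(\f,S)$ are supported and then add the twists.

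First I would recall the structure of the Koszul complex on $r$ forms of the same bidegree $\textbf{d}$: the $j$-th term is $\K_j(\f,S)=\bigwedge^j S^r = S(-j\textbf{d})^{\binom{r}{j}}$. Next I would compute the support of $H^{n+1}_{B_1}$ and $H^{m+1}_{B_2}$ of a single shifted free module $S(-j\textbf{d})$. Since $B_1=(x_0,\dots,x_n)$ is generated by the $n+1$ variables of the first factor, $H^{n+1}_{B_1}(S)$ is the injective hull supported on bidegrees $(\le -n-1,\,\ge 0)$; concretely its support is $(-n-1,0)+(-\N,\N)$. Twisting by $-j\textbf{d}$ shifts this to $j\textbf{d}-(n+1,0)+(-\N,\N)=v_j+(-\N,\N)$. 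Symmetrically $\Supp H^{m+1}_{B_2}(S(-j\textbf{d}))=w_j+(\N,-\N)$. Therefore $\Supp(V_j)\subseteq v_j+(-\N,\N)$ and $\Supp(W_j)\subseteq w_j+(\N,-\N)$, since $V_j$ and $W_j$ are subquotients of these modules.

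With these two support computations in hand, the lemma follows by substituting the indices dictated by Theorem~\ref{main B regularity}: for $i>0$ we have $\Supp(H_i(\KO(\f,S)))=\Supp(V_{n+1+i})\cup\Supp(W_{m+1+i})$, and the displayed support bounds give exactly $v_{n+1+i}+(-\N,\N)\cup w_{m+1+i}+(\N,-\N)$; likewise for $0\le i\le \dim V$ the identity $\Supp(H^i_B(S/I))=\Supp(V_{n+1-i})\cup\Supp(W_{m+1-i})$ yields the second displayed inclusion. Thus both inclusions are immediate translations of Theorem~\ref{main B regularity} once the equal-degree hypothesis collapses the twists to multiples of $\textbf{d}$.

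The only genuinely substantive point, and the one I would treat most carefully, is the claim that \emph{the inclusions are sharp}. Containment comes for free from the subquotient description, but sharpness requires producing, for each relevant $j$, a bidegree in $v_j+(-\N,\N)$ (resp. $w_j+(\N,-\N)$) at which $V_j$ (resp. $W_j$) is genuinely nonzero, and moreover checking that these nonvanishing classes survive to the subquotient rather than being killed by the differentials of the spectral sequence in the proof of Theorem~\ref{main B regularity}. I expect the cleanest route is to pin down the extremal bidegrees: the socle-type generator of $H^{n+1}_{B_1}(S(-j\textbf{d}))$ sits exactly at $v_j=j\textbf{d}-(n+1,0)$, and because this corner lies at the boundary of the support cone it cannot be in the image of a differential coming from a more ``interior'' term, so it is not cancelled. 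Verifying this boundary/extremality argument—that the corner bidegrees $v_j$ and $w_j$ are not hit by Koszul differentials and hence persist in $V_j,W_j$—is where the real work lies, and it is the step I would write out in full detail.
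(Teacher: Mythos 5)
Your proposal is correct and follows essentially the same route as the paper: the inclusions come from Theorem \ref{main B regularity} together with the computation $\Supp H^{n+1}_{B_1}(S(-j\mathbf{d}))=v_j+(-\N,\N)$ (and its mirror for $B_2$), since the relevant $E^2=E^\infty$ terms are subquotients of these modules. The sharpness detail you defer is exactly the paper's one-line observation: $v_q$ lies in $\Supp H^{n+1}_{B_1}(K_q(\f,S))$ but, because the adjacent cones are translates by $\pm(d,e)$ with $d,e>0$, not in $\Supp H^{n+1}_{B_1}(K_{q\pm1}(\f,S))$, so both differentials vanish there and the class survives.
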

\begin{proof}
With the proof of Theorem \ref{main B regularity}, in the second page of the spectral sequence we have for $p< n+m+1$
$$
^{v}E^{\infty}_{p,q}=^{v}E^2_{p,q}= \begin{cases}
 H_q(H^{n+1}_{B_1}\KO(\f,S)) \quad \mbox{if} \,\,p=n+1,\\
 H_q(H^{n+1}_{B_2}\KO(\f,S)) \quad \mbox{if}\,\, p=m+1,\\
 0 \quad \mbox {else}.
\end{cases}
$$
Combining with the Theorem \ref{main B regularity} shows the inclusions. The sharpness follows from the fact that  for any $q$, $v_q $ (resp. $w_q$) is in the support of $H^{n+1}_{B_1}K_q(\f,S)$ (resp. $H^{m+1}_{B_2}K_q(\f,S)$) and it is not in the support of $ H^{n+1}_{B_1}K_{q-1}(\f,S)$ and $H^{n+1}_{B_1}K_{q+1}(\f,S)$ (resp. $H^{m+1}_{B_2}K_{q-1}(\f,S)$ and $H^{m+1}_{B_2}K_{q-1}(\f,S)$).
\end{proof}
\begin{cor} \label{cor equality H1}
Let $I=(f_1,\dots,f_r)\subseteq S$ and $\deg(f_i)=\textbf{d}= (d,e)$ and $V$ be the scheme defined by $I$.  If $\codim(V)=r$, then
\begin{itemize}
\item[$(1)$ ] if  $\mu\notin v_{i}+(-\mathbb{N},\mathbb{N})\cup w_{j}+(\mathbb{N}, -\mathbb{N})$ for $i=r-m,r-m-1$ and $j=r-n,r-n-1$ then
$$
% H^{\dim V}(V,\mathcal{O}_V(\mu))\cong
H^{\dim(V)+1}_B(S/I))_\mu= (S/I)^{\star}_{\sigma-\mu}. 
$$
\end{itemize} 
 \end{cor}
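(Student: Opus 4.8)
The plan is to read off the statement from the filtration of $H^{\dim(V)+1}_B(S/I)$ produced by the double-complex spectral sequence in the proof of Theorem \ref{main B regularity}. Recall that $H^{\dim(V)+1}_B(S/I)$ appears as the abutment of that spectral sequence in the total degree corresponding to the bottom-left corner, and that comparing the two pages exhibits it with a three-step filtration whose graded pieces are subquotients of $V_{r-m}$, of $W_{r-n}$, and of the corner term $M\cong (S/I)^{\star}(\sigma)$. The entire argument is therefore about isolating the corner term $M$ in the single internal degree $\mu$, and the four avoidance hypotheses should match exactly the four places where a competing contribution could occur.

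First I would kill the two side pieces. By Lemma \ref{lem supp homologies} one has $\Supp(V_{r-m})\subseteq v_{r-m}+(-\N,\N)$ and $\Supp(W_{r-n})\subseteq w_{r-n}+(\N,-\N)$, so the hypotheses $\mu\notin v_{r-m}+(-\N,\N)$ and $\mu\notin w_{r-n}+(\N,-\N)$ force $(V_{r-m})_\mu=(W_{r-n})_\mu=0$. Since the corner term sits in the deepest filtration step, once the $V_{r-m}$ and $W_{r-n}$ graded pieces vanish in degree $\mu$ the module $H^{\dim(V)+1}_B(S/I)_\mu$ collapses onto the corner contribution, which at this stage is only known to be a subquotient of $M_\mu=(S/I)^{\star}_{\sigma-\mu}$.

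The remaining, and main, point is to upgrade this subquotient to all of $(S/I)^{\star}_{\sigma-\mu}$, i.e. to show that no differential truncates the corner in degree $\mu$. The corner has no outgoing differential, because the rows $H^{p}_B$ vanish for $p>n+m+1$, so only incoming differentials matter. Tracing their sources back through the three nonzero rows $H^{n+1}_{B_1}$, $H^{m+1}_{B_2}$, $H^{n+m+1}_B$ of the first page, they originate from the Koszul terms neighbouring the corner, namely $V_{r-m-1}$ and $W_{r-n-1}$, whose supports lie in $v_{r-m-1}+(-\N,\N)$ and $w_{r-n-1}+(\N,-\N)$ again by Lemma \ref{lem supp homologies}. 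Hence the two extra hypotheses $\mu\notin v_{r-m-1}+(-\N,\N)$ and $\mu\notin w_{r-n-1}+(\N,-\N)$ make every incoming differential vanish in degree $\mu$, so the corner contributes its full value and $H^{\dim(V)+1}_B(S/I)_\mu=(S/I)^{\star}_{\sigma-\mu}$.

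I expect the genuine difficulty to be precisely this last piece of bookkeeping: correctly identifying which adjacent Koszul-degree terms the surviving differentials into the corner connect to (carefully fixing the bidegree of the page-$s$ differential, since an off-by-one here changes which of $V_{r-m\pm1}$, $W_{r-n\pm1}$ is relevant), and checking that both the $n<m$ and $n=m$ regimes behave uniformly. Alongside this I would keep the duality twist under strict control, so that the corner is identified with $(S/I)^{\star}(\sigma)$ on the nose and its degree-$\mu$ part is exactly $(S/I)^{\star}_{\sigma-\mu}$ rather than a shifted or re-dualized version. Once the four avoidance conditions are matched to the two side pieces and the two incoming differentials, the claimed equality follows at once.
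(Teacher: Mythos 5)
Your argument is essentially the paper's: you isolate the corner term $M\cong (S/I)^{\star}(\sigma)$ of the \v{C}ech--Koszul spectral sequence in degree $\mu$ by using Lemma \ref{lem supp homologies} to kill, first, the other two filtration pieces $V_{r-m}$ and $W_{r-n}$, and then the two terms linked to $M$ by higher differentials; those terms are indeed $W_{r-n-1}$ and $V_{r-m-1}$, so your four avoidance conditions and the conclusion are correct. The one slip is the direction of the arrows: since the Koszul index is homological and $K_{r+s}=0$ for $s\geq 1$, nothing maps \emph{into} the corner; rather, $M$ supports \emph{outgoing} differentials $\psi_1:M\to W_{r-n-1}$ and $\psi_2:\ker\psi_1\to V_{r-m-1}$ (exactly the maps the paper names), so your justification that ``the corner has no outgoing differential because $H^{p}_B$ vanishes for $p>n+m+1$'' is not the right reason, and a consistent use of your stated convention would point you at $V_{r+m-1}$ and $W_{r+n-1}$ instead. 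This does not damage the proof, because a graded differential vanishes in degree $\mu$ as soon as either its source or its target does, so the same support conditions force $M_\mu=(S/I)^{\star}_{\sigma-\mu}$ to survive to $E_\infty$ in either reading.
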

\begin{proof}
Note that$\Supp  \left(H^{n+1}_{B_1}(K_{i}(\f,S))\right)=v_i +(-\mathbb{N}, \mathbb{N}) $ and $\Supp H^{m+1}_{B_2}(K_{i}(\f,S))= w_i +(\mathbb{N}, +\mathbb{N})$ for all $i$.   Consider the double complex $\CE^{\bullet}_{B}(\KO_{\bullet}(\f,S))$ and suppose $n\leq m$ as in the proof of Theorem \ref{main B regularity}.  Since $\Supp(V_i)\cap \Supp(W_j)=\emptyset$, there will be no nonzero map in the spectral sequence among them. In this case, in the  $n+2$-th page we have an induced map $\psi_1: M \rightarrow W_{r-n-1}$, and in the $m+2$-th page we have  an induced map $\psi_2: \ker \psi_1\rightarrow V_{r-m-1}$.  If $\mu \notin v_{r-m}+(-\mathbb{N},\mathbb{N})\cup w_{r-n}+(\mathbb{N}, -\mathbb{N})$, $H^r_B(S/I)_{\mu}\cong (\ker \psi_2)_\mu$. In addition, if $\mu \notin v_{r-m-1}+(-\mathbb{N},\mathbb{N})\cup w_{r-n-1}+(\mathbb{N}, -\mathbb{N})$
$$
%H^{\dim V}(V,\mathcal{O}_V(\mu))\cong
 H^{\dim(V)+1}_B(S/I)_{\mu}\cong (\ker \psi_2)_\mu= M_\mu= (S/I)^{\star}_{\sigma-\mu}.
$$
\end{proof}
%\begin{cor}
%Let $S=k[x_0,\dots,x_n,y_0,\dots,y_m]$ be a bigraded polynomial ring where $\deg(x_i)=(1,0)$ and $\deg(y_i)=(0,1)$. Define $B=(x_0,\dots,x_n)\cdot (y_0,\dots,y_m)$ the irrelevant ideal of $\mathbb{P}^n\times \mathbb{P}^m$. Assume $I=(f_1,\dots,f_{n+m})$ generated by $n+m$ forms of degree $(d,e)$ such that $\codim (S/I)=n+m-1$. Let $J$ be the saturation of $I$ with respect to $B$ and $V$ be the scheme of 
%points defined by $J$. Then for all pairs $(a,b)\in (nd-n,(n+m)e-m)+\N^2\cup ((n+m)d-n,me-m)+\N^2 $
%$$
%H^1_B(S/J)_{(a,b)}= (S/I)_{((n+m)d-(n+1)-a,(n+m)e-(m+1)-b)} 
%$$
%\end{cor}
%\begin{proof}
%With the same notations as in the proof of Theorem \ref{main B regularity} we have
%$$
%H^1_B(S/I)\cong \ker (W_m\rightarrow V_{n-1})\oplus \coker(W_{m+1}\rightarrow V_n)\oplus \ker \psi 
%$$
%The assertion follows from vanishing of $W_m,V_n,W_{m-1}$ and $V_{n-1}$ at  bidegree $(a,b)$.
%\end{proof}
%therefore,  all $a,b$ where  
%$$((n-1)d-n,(n-1)e+1)\leq (a,b)\leq ((n+m)d-(n+1),(n+m)e-(m+1))$$
% and 
% $$((n-1)d+1,(n-1)e-m)\leq (a,b)\leq ((n+m)d-(n+1),(n+m)e-(m+1))$$
%   are in the support of $\ker \phi$.
% On the other hand, support of 
% $$
% V_{n}\oplus W_{m}\subset ((n)d-(n+1), (n)e)+(-\N,+\N)\cup ((m)d,(m)e-(m+1))+(+\N,-\N).
% $$
%   Hence, all pairs in $(nd-n,(n+m)e-m)+\N^2\cup ((n+m)d-n,me-m)+\N^2$ are not in the support of $H^1_B(S/I)=H^1_B(S/J)$. Furthermore, since $???$ is in the support of $H^1_B(S/J)$, $B$-regularity of $S/J$ is not convex 
\section{Complete intersection points in $\mathbb{P}^n\times \mathbb{P}^m$}\label{sec:points}
\noindent In this section we apply the results in the previous section to the case of complete intersection points in $\mathbb{P}^n\times \mathbb{P}^m$. In the rest of this section, $I=(f_1,\dots,f_{n+m})$ and $V$ be the complete intersection  scheme  of points defined by $I$ with $\textbf{d}:= \deg(f_i)=(d,e)$.
\begin{dfn}
Set  $\Gamma_i := \Supp(H^{n+1}_{B_1}(\K_{n+i}(\f,S)))\cup \Supp(H^{m+1}_{B_2}(\K_{m+i}(\f,S)))
$.
Notice that $\Gamma_i=\emptyset$ if and only if $i\geq \max \{n,m\}$.  If $i\leq m$ then 
$$
\Supp(H^{n+1}_{B_1}(\K_{n+i}(\f,S))) = v_{n+i}+ (-\mathbb{N}, \mathbb{N}),
$$
and similarly for $\Supp(H^{m+1}_{B_2}(\K_{m+i}(\f,S)))$ if $i\leq n$.
\end{dfn}
\begin{dfn}\label{F'}
For a function $F: \mathbb{Z}^2\rightarrow \mathbb{Z}$ define
\begin{align*}
F^{\star}(a,b)&:= F(-a,-b),\\
F'(a,b)&:= F(-a,b), \,\, \text{and}\\
F''(a,b)&:= F(a,-b).
\end{align*}
\end{dfn}
\begin{thm}\label{Hilbert points}
Let $S=k[x_0,\dots,x_n,y_0,\dots,y_m]$ be a bigraded polynomial ring where $\deg(x_i)=(1,0)$ and $\deg(y_i)=(0,1)$.  Assume $V$ be a complete intersection scheme of points defined by $I=(f_1,\dots,f_{n+m})$ with $\deg(f_i)=(d,e)$. 
\begin{itemize}
\item[ $(1)$] If $\mu\notin \cup_{i\geq 2}\Gamma_i$ then
$$
\HF_{S/I}(\mu)= \chi(\mu)
$$
where $\chi((a,b))= \sum_i (-1)^i\binom{n+m}{i}\binom{n+a-id}{n}\binom{m+b-ie}{m}$.
\item[$(2)$] If $\mu\notin \Gamma_1$ then
$$
\HF_{S/I}(\mu)= \HF_V(\mu)
$$
\item[$(3)$]
If $\mu\notin \Gamma_0$ then 
$$
\HF_{V}(\mu)+ \HF_V(\sigma-\mu)= \deg(V)=\binom{n+m}{n}d^ne^m.
$$
In particular, if $\mu \in (nd-n,(n+m)e-m)+\N^2\cup ((n+m)d-n,me-m)+\N^2$ then $\HF_V(\mu)= \deg(V)$ and $\mu\in \reg_B(V)$.
\item[$(4)$]
If $\mu \in \Gamma_0\setminus \{ \Gamma_{-1}\cup \Gamma_1\}$  then
$$
\HF_V(\mu) +\HF_V(\sigma-\mu)= \binom{n+m}{n}(d^ne^m-\epsilon(\mu)),
$$
where $\epsilon(\mu)= \HF_S'(\mu-v_n)+ \HF_S''(\mu-w_n)$ as in the Definition \ref{F'}.
\item[$(5)$]
If $\mu\in \cup_{i\geq 1} \Gamma_i \setminus \{\Gamma_0\}$ then
$$
\HF_{V}(\mu)= \binom{n+m}{n}d^ne^m- \chi(\sigma-\mu).
$$
 In particular, if $\mu\in \Gamma_1\setminus \{\Gamma_0\cup \Gamma_2\}$ then
$$
\dim(I_V/I)_\mu= \binom{n+m}{n+1}\HF_S'(\mu-v_{n+1})+ \binom{n+m}{m+1}\HF_S''(\mu- w_{m+1}).
$$
\end{itemize}

\end{thm}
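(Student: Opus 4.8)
The plan is to reduce the whole statement to the computation of the single graded piece $H^{\dim V+1}_B(S/I)_\mu=H^1_B(S/I)_\mu$, since for a zero-dimensional $V$ this and $H^0_B(S/I)$ are the only local cohomology modules of $S/I$ that survive. First I would exploit that $\widetilde{S/I}=\mathcal O_V$ is supported on finitely many points, so $H^i(V,\mathcal O_V(\mu))=0$ for $i\ge 1$ and $\dim_k H^0(V,\mathcal O_V(\mu))=\deg(V)$; reading this off the exact sequence relating $S/I$ to its sheaf (equivalently, feeding $HP_{S/I}\equiv\deg(V)$ and $H^i_B(S/I)=0$ for $i\ge 2$ into \eqref{SGF}) yields
$$\HF_V(\mu)=\deg(V)-\dim_k H^1_B(S/I)_\mu,\qquad \HF_{S/I}(\mu)-\HF_V(\mu)=\dim_k H^0_B(S/I)_\mu.$$
Thus the displayed formula in item (5) is equivalent to the single identity $\dim_k H^1_B(S/I)_\mu=\chi(\sigma-\mu)$, while its ``in particular'' clause is exactly a computation of $\dim_k H^0_B(S/I)_\mu=\dim(I_V/I)_\mu$.

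The combinatorial engine I would isolate next is the reflection identity $\sigma-\Gamma_i=\Gamma_{-i}$, immediate from $\sigma-v_{n+i}=w_{m-i}$ and $\sigma-w_{m+i}=v_{n-i}$, together with the containment
$$\Big(\bigcup_{i\ge 1}\Gamma_i\Big)\cap\Big(\bigcup_{i\le -1}\Gamma_i\Big)\subseteq\Gamma_0.$$
To prove it I write $\Gamma_a=P_a\cup Q_a$ with $P_a=v_{n+a}+(-\N,\N)$ and $Q_a=w_{m+a}+(\N,-\N)$. For $a\ge 1$, $b\le -1$ the mixed intersections $P_a\cap Q_b$ and $Q_a\cap P_b$ are empty, since (using $d,e\ge 1$) their defining inequalities would force both $m+b<n+a$ and $n+a<m+b$; and the like intersections satisfy $P_a\cap P_b\subseteq P_0$, $Q_a\cap Q_b\subseteq Q_0$ by monotonicity of the bounds in the index. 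Granting this, the hypothesis $\mu\in\bigcup_{i\ge1}\Gamma_i\setminus\Gamma_0$ forces $\mu\notin\bigcup_{i\le -1}\Gamma_i$, hence $\mu\notin\bigcup_{i\le 0}\Gamma_i$.

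Now the main formula follows in two strokes. Since $\mu\notin\Gamma_0\cup\Gamma_{-1}$, Corollary \ref{cor equality H1} applies and gives $H^1_B(S/I)_\mu\cong (S/I)^\star_{\sigma-\mu}$, so $\dim_k H^1_B(S/I)_\mu=\HF_{S/I}(\sigma-\mu)$. By the reflection identity $\mu\notin\bigcup_{i\le -2}\Gamma_i$ is equivalent to $\sigma-\mu\notin\bigcup_{i\ge 2}\Gamma_i$, which is precisely the hypothesis under which item (1) of the present theorem computes $\HF_{S/I}(\sigma-\mu)=\chi(\sigma-\mu)$. Combining with the first identity of the opening paragraph gives $\HF_V(\mu)=\deg(V)-\chi(\sigma-\mu)$, as claimed.

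For the ``in particular'' clause, assume $\mu\in\Gamma_1\setminus(\Gamma_0\cup\Gamma_2)$ and evaluate $\dim(I_V/I)_\mu=\dim_k H^0_B(S/I)_\mu$. In the spectral sequence of $\CE^{\bullet}_{B}(\KO_{\bullet}(\f,S))$ from the proof of Theorem \ref{main B regularity}, $H^0_B(S/I)$ is read off from the subquotients $V_{n+1}$ and $W_{m+1}$ of $H^{n+1}_{B_1}(\K_{n+1}(\f,S))$ and $H^{m+1}_{B_2}(\K_{m+1}(\f,S))$. The Koszul differentials into and out of these spots have source and target supported in $\Supp H^{n+1}_{B_1}(\K_{n+2})\subseteq\Gamma_2$ and $\Supp H^{n+1}_{B_1}(\K_{n})\subseteq\Gamma_0$ (and the $B_2$-analogues), so on the locus $\mu\notin\Gamma_0\cup\Gamma_2$ they vanish and the subquotients become the full modules; as $\Supp V_{n+1}\subseteq P_1$ and $\Supp W_{m+1}\subseteq Q_1$ are disjoint, there is no further differential and $H^0_B(S/I)_\mu\cong H^{n+1}_{B_1}(\K_{n+1})_\mu\oplus H^{m+1}_{B_2}(\K_{m+1})_\mu$. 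Finally, graded local duality gives $\dim_k H^{n+1}_{B_1}(S)_{(a,b)}=\dim_k S_{(-a-n-1,\,b)}$, and tracking the twist by $(n+1)\mathbf{d}$ converts the two summands into $\binom{n+m}{n+1}\HF_S'(\mu-v_{n+1})$ and $\binom{n+m}{m+1}\HF_S''(\mu-w_{m+1})$, which is the asserted formula. I expect the genuine obstacle to lie in this last step, namely certifying that on $\Gamma_1\setminus(\Gamma_0\cup\Gamma_2)$ the relevant $E_\infty$-terms are the full local cohomology modules rather than proper subquotients; once the emptiness of the mixed intersections $P_a\cap Q_b$ is observed, the remaining support combinatorics and the duality identifications are routine.
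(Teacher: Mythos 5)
Where your argument engages the theorem it is sound and essentially the paper's: the reduction of item (5) to $\dim_k H^1_B(S/I)_\mu=\chi(\sigma-\mu)$ via the Serre--Grothendieck formula, the reflection $\sigma-\Gamma_i=\Gamma_{-i}$, and the direct appeal to Corollary \ref{cor equality H1} on the locus $\mu\notin\Gamma_0\cup\Gamma_{-1}$ reproduce the paper's mechanism (the paper merely routes (5) through its own parts (2) and (3) rather than citing the corollary again). Two of your ingredients are actually improvements: the containment $\bigl(\cup_{i\geq1}\Gamma_i\bigr)\cap\bigl(\cup_{i\leq-1}\Gamma_i\bigr)\subseteq\Gamma_0$, proved by splitting $\Gamma_a=P_a\cup Q_a$ and checking the four intersections, is a clean general form of the paper's asserted fact (which is misstated there with $\cup$ in place of $\cap$); and your derivation of the ``in particular'' clause of (5) from $H^0_B(S/I)_\mu\cong H^{n+1}_{B_1}(\K_{n+1}(\f,S))_\mu\oplus H^{m+1}_{B_2}(\K_{m+1}(\f,S))_\mu$ supplies a proof the paper omits entirely. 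The one thing to add there is that the higher differentials you must rule out include those involving the corner term $M\cong (S/I)^{\star}(\sigma)$, not only the ones between the $V$- and $W$-rows killed by disjointness of $P_1$ and $Q_1$; these die for bidegree reasons ($M$ sits in total degree $1$ and no $d_r$ links it to the total-degree-$0$ spots), which is worth a sentence.

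The genuine gap is one of coverage: the statement has five items and you prove only item (5). Item (1) is invoked as an input (to compute $\HF_{S/I}(\sigma-\mu)=\chi(\sigma-\mu)$) but never established; it needs Lemma \ref{lem supp homologies} to see that $(H_i(\KO(\f,S)))_\mu=0$ for all $i\geq1$ when $\mu\notin\cup_{i\geq2}\Gamma_i$, so that the Koszul Euler characteristic of Equation \ref{definition chi} collapses to $\HF_{S/I}(\mu)$. Your opening identities contain the substance of item (2) and half of item (3), but item (3) also requires $\dim_k H^1_B(S/I)_\mu=\HF_V(\sigma-\mu)$ for \emph{every} $\mu\notin\Gamma_0$, which forces the symmetric case analysis ``$\mu\notin\Gamma_{-1}$ or $\sigma-\mu\notin\Gamma_{-1}$'' (one must hold, else $\mu\in\Gamma_{-1}\cap\Gamma_{1}\subseteq\Gamma_0$) that you never perform, and its ``in particular'' sentence about $\reg_B(V)$ is untouched. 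Item (4), where the correction $\epsilon(\mu)$ comes from the surviving contributions of $H^{n+1}_{B_1}(\K_n(\f,S))_\mu$ and $H^{m+1}_{B_2}(\K_m(\f,S))_\mu$ to $H^1_B(S/J)_\mu$, is not addressed at all. None of this is hard given your setup --- (4) in particular is the same spectral-sequence bookkeeping you carry out for $H^0_B$, shifted by one total degree --- but as written the proposal proves a single item of the five.
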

\begin{proof}
Denote the saturation of $I$ with respect to $B$ by $J$. In this case $\HF_V(\mu)= \dim(S/J)_{\mu}$ for all $\mu\in \mathbb{Z}^2$. In the proof we use these two  simple fact that $\mu \in \Gamma_{i-1}\cup \Gamma_{i+1}$ yields $\mu \in \Gamma_i$  and if $\mu\in \Gamma_i$ then $\sigma-\mu\in \Gamma_{-i}$. By Proposition \ref{degree} the Hilbert polynomial of $V$ is $D:= \binom{n+m}{n}d^ne^m$ which in this case is equal to the $\deg(V)$. By Serre Grothendieck formula,
\begin{equation}\label{SGF2}
\HF_{S/J}(\mu)+\dim (H^1_B(S/J)_{\mu})=  D
\end{equation}
\\
$(1)$ If $\mu\notin \cup_{i\geq 2}\Gamma_i$, by 
Lemma \ref{lem supp homologies} $(H_i(\K(\f,S))_
\mu=0$ for all $i\geq 1$.  Therefore by Equation 
\ref{definition chi}, $(S/I)_\mu=\chi(\mu)$.\\
$(2)$ If $\mu \notin \Gamma_1$, by Lemma \ref{lem supp homologies}, $(H^0_B(S/I))_\mu=0$ therefore $(S/I)_\mu= (S/J)_\mu$.\\
$(3)$ First note that $\mu, \sigma-\mu \notin \Gamma_0$. We claim that $\mu\notin \Gamma_{-1}$ or $\sigma-\mu \notin \Gamma_{-1}$. Suppose not, then $\mu\in \Gamma_{-1}\cap \Gamma_{1}$ which yields $\mu\in \Gamma_0$ which is a contradiction. So assume $\mu\notin \Gamma_{-1}$. By  Corollary \ref{cor equality H1}, 
$$
\dim (H^1_B(S/I)_{\mu})=  \HF_{S/I}(\sigma-\mu).
$$
 Since $\sigma-\mu\notin \Gamma_1$, by part $(2)$ and Equation \ref{SGF}  
$$
\binom{n+m}{n}d^ne^m-  \HF_V(\mu) \HF_{V}(\sigma-\mu).
 $$
 The same argument works if we assume $\sigma-\mu \notin \Gamma_{-1}$. \\
 $(4)$ By part $(1)$ $\HF_{S/I}(\mu)= \HF_V(\mu)$. By considering the double complex as in the proof of Theorem \ref{main B regularity} and Lemma  \ref{lem supp homologies}, 
 \begin{align*}
  &\left(H_n(H^{n+1}_{B_1}\K(\f,S))\right)\mu= (H^{n+1}_{B_1}(K_{n}(\f,S))_{\mu},  \,\,  \text {and} \\
& \left(H_m(H^{m+1}_{B_2}\K(\f,S))\right)_\mu= \left(H^{m+1}_{B_2}(K_{m}(\f,S)\right)_{\mu}.
 \end{align*}
  By the abutment of the spectral sequence 
$$
\left( H^1_B(S/J)\right)_\mu= \HF_{S/I}(\sigma-\mu)
+ \dim (H^{n+1}_{B_1}(K_{n}(\f,S))_{\mu}+.\left(H^{m+1}_{B_2}(K_{m}(\f,S)\right)_{\mu}.
$$
Since $\sigma-\mu\notin \Gamma_1$, by parts $(1)$ and $(2)$, $\HF_{S/I}(\mu)= \HF_{V}(\mu)$ and by the definition, $ \dim (H^{n+1}_{B_1}(K_{n}(\f,S))_{\mu}+ \left(H^{m+1}_{B_2}(K_{m}(\f,S)\right)_{\mu}= \binom{n+m}{n}\epsilon(\mu)$. The assertion follows by $ \left( H^1_B(S/J)\right)_\mu =  \binom{n+m}{n}d^ne^m- \HF_V(\mu) $.\\
$(5)$ By part $(3)$, $\HF_{V}(\mu)= \binom{n+m}{n}d^ne^m-\HF_{V}(\sigma-\mu)$. Because $\mu\notin \Gamma_{-1}$ ($\mu \in \Gamma_{-1}$ yields $\mu \in \Gamma_0$) therefore $\sigma-\mu\notin \Gamma_1$, which  by part $(2)$, yields $\HF_{V}(\sigma -\mu)= \HF_{S/I}(\sigma -\mu)$. On the other hand, since $\mu \notin \cup_{i\geq 2}\Gamma_i$, by part $(1)$, $\HF_{S/I}(\sigma-\mu)= \chi(\sigma-\mu)$.
 \end{proof}

Let $S=[x_0,x_1,x_2,y_0,y_1,y_2]$ and $\deg(x_i)=(1,0)$ and $\deg(y_i)= (0,1)$. Let $I=(f_1,\dots,f_4)$ where $\deg(f_i)= (2,2)$. Suppose scheme $V$ defined by $I$ is complete intersection. The following picture  demonstrate the regions $\Gamma_i$ in the Theorem \ref{Hilbert points}.
\begin{center}
\definecolor{qqffqq}{rgb}{0.,1.,0.}
\definecolor{ffqqqq}{rgb}{1.,0.,0.}
\definecolor{uuuuuu}{rgb}{0.26666666666666666,0.26666666666666666,0.26666666666666666}
\definecolor{ffffqq}{rgb}{0.,0.,1.}
\definecolor{ududff}{rgb}{0.30196078431372547,0.30196078431372547,1.}
\definecolor{cqcqcq}{rgb}{0.7529411764705882,0.7529411764705882,0.7529411764705882}
\begin{tikzpicture}[line cap=round,line join=round,>=triangle 45,x=0.7cm,y=0.7cm]
\draw [color=cqcqcq,, xstep=0.7cm,ystep=0.7cm] (-1.8,-1.8) grid (7.,7.);
\draw[->,color=black] (-1.8,0.) -- (7.,0.);
\foreach \x in {-1.,1.,2.,3.,4.,5.,6.}
\draw[shift={(\x,0)},color=black] (0pt,2pt) -- (0pt,-2pt) node[below] {\footnotesize $\x$};
\draw[->,color=black] (0.,-1.8) -- (0.,7.);
\foreach \y in {-1.,1.,2.,3.,4.,5.,6.}
\draw[shift={(0,\y)},color=black] (2pt,0pt) -- (-2pt,0pt) node[left] {\footnotesize $\y$};
\draw[color=black] (0pt,-10pt) node[right] {\footnotesize $0$};
\clip(-1.8,-1.8) rectangle (7.,7.);
\fill[line width=2.pt,color=ffffqq,fill=ffffqq,fill opacity=0.10000000149011612] (1.,4.) -- (1.0025348000000005,10.376970399999998) -- (-5.374435599999996,10.379505199999999) -- (-5.376970399999998,4.002534800000001) -- cycle;
\fill[line width=2.pt,color=ffqqqq,fill=ffqqqq,fill opacity=0.10000000149011612] (-1.,2.) -- (-1.,10.) -- (-9.,10.) -- (-9.,2.) -- cycle;
\fill[line width=2.pt,color=ffqqqq,fill=ffqqqq,fill opacity=0.10000000149011612] (2.,-1.) -- (2.,-8.) -- (9.,-8.) -- (9.,-1.) -- cycle;
\fill[line width=2.pt,color=ffffqq,fill=ffffqq,fill opacity=0.10000000149011612] (4.,1.) -- (4.,-7.) -- (12.,-7.) -- (12.,1.) -- cycle;
\fill[line width=2.pt,color=qqffqq,fill=qqffqq,fill opacity=0.10000000149011612] (3.,6.) -- (3.,15.7382) -- (-6.7382,15.7382) -- (-6.7382,6.) -- cycle;
\fill[line width=2.pt,color=qqffqq,fill=qqffqq,fill opacity=0.10000000149011612] (6.,3.) -- (6.,-10.) -- (19.,-10.) -- (19.,3.) -- cycle;
\draw [line width=2.pt,color=ffffqq] (1.,4.)-- (1.0025348000000005,10.376970399999998);
\draw [line width=2.pt,color=ffffqq] (1.0025348000000005,10.376970399999998)-- (-5.374435599999996,10.379505199999999);
\draw [line width=2.pt,color=ffffqq] (-5.374435599999996,10.379505199999999)-- (-5.376970399999998,4.002534800000001);
\draw [line width=2.pt,color=ffffqq] (-5.376970399999998,4.002534800000001)-- (1.,4.);
\draw [line width=2.pt,color=ffqqqq] (-1.,2.)-- (-1.,10.);
\draw [line width=2.pt,color=ffqqqq] (-1.,10.)-- (-9.,10.);
\draw [line width=2.pt,color=ffqqqq] (-9.,10.)-- (-9.,2.);
\draw [line width=2.pt,color=ffqqqq] (-9.,2.)-- (-1.,2.);
\draw [line width=2.pt,color=ffqqqq] (2.,-1.)-- (2.,-8.);
\draw [line width=2.pt,color=ffqqqq] (2.,-8.)-- (9.,-8.);
\draw [line width=2.pt,color=ffqqqq] (9.,-8.)-- (9.,-1.);
\draw [line width=2.pt,color=ffqqqq] (9.,-1.)-- (2.,-1.);
\draw [line width=2.pt,color=ffffqq] (4.,1.)-- (4.,-7.);
\draw [line width=2.pt,color=ffffqq] (4.,-7.)-- (12.,-7.);
\draw [line width=2.pt,color=ffffqq] (12.,-7.)-- (12.,1.);
\draw [line width=2.pt,color=ffffqq] (12.,1.)-- (4.,1.);
\draw [line width=2.pt,color=qqffqq] (3.,6.)-- (3.,15.7382);
\draw [line width=2.pt,color=qqffqq] (3.,15.7382)-- (-6.7382,15.7382);
\draw [line width=2.pt,color=qqffqq] (-6.7382,15.7382)-- (-6.7382,6.);
\draw [line width=2.pt,color=qqffqq] (-6.7382,6.)-- (3.,6.);
\draw [line width=2.pt,color=qqffqq] (6.,3.)-- (6.,-10.);
\draw [line width=2.pt,color=qqffqq] (6.,-10.)-- (19.,-10.);
\draw [line width=2.pt,color=qqffqq] (19.,-10.)-- (19.,3.);
\draw [line width=2.pt,color=qqffqq] (19.,3.)-- (6.,3.);
\begin{scriptsize}
\draw [fill=ududff] (5.,5.) circle (1.0pt);
\draw[color=ududff] (5.429146698908396,5.239946296842116) node {$\sigma$};
\draw [fill=ududff] (-1.,2.) circle (1.0pt);
\draw [fill=ududff] (2.,-1.) circle (0.5pt);
\draw [fill=ududff] (1.,4.) circle (1.0pt);
\draw [fill=ududff] (4.,1.) circle (1.0pt);
\draw [fill=ududff] (3.,6.) circle (1.0pt);
\draw [fill=ududff] (6.,3.) circle (1.0pt);
\draw [fill=ududff] (1.0025348000000005,10.376970399999998) circle (2.5pt);
\draw [fill=uuuuuu] (-5.374435599999996,10.379505199999999) circle (2.5pt);
\draw [fill=uuuuuu] (-5.376970399999998,4.002534800000001) circle (2.5pt);
\draw [fill=ududff] (-1.,10.) circle (2.5pt);
\draw [fill=uuuuuu] (-9.,10.) circle (2.5pt);
\draw [fill=uuuuuu] (-9.,2.) circle (2.5pt);
\draw[color=uuuuuu] (-2.571072346548095,2.493602445416766) node {$M$};
\draw [fill=ududff] (2.,-8.) circle (0.5pt);
\draw [fill=uuuuuu] (9.,-8.) circle (0.5pt);
\draw [fill=uuuuuu] (9.,-1.) circle (0.5pt);
\draw [fill=ududff] (4.,-7.) circle (0.5pt);
\draw [fill=uuuuuu] (12.,-7.) circle (2.5pt);
\draw [fill=uuuuuu] (12.,1.) circle (0.5pt);
\draw [fill=ududff] (3.,15.7382) circle (2.5pt);
\draw [fill=uuuuuu] (-6.7382,15.7382) circle (2.5pt);
\draw [fill=uuuuuu] (-6.7382,6.) circle (2.5pt);
\draw [fill=ududff] (6.,-10.) circle (2.5pt);
\draw [fill=uuuuuu] (19.,-10.) circle (2.5pt);
\draw [fill=uuuuuu] (19.,3.) circle (2.5pt);
\end{scriptsize}
\end{tikzpicture}
\end{center}
$\Gamma_{-1}$ is the red region, $\Gamma_0$ is the blue region and $\Gamma_1$ is the green region.
By Theorem \ref{Hilbert points}, Hilbert function of $V$ at the bidegrees except the intersection of blue and green   are independent from the choices of $f_i$'s and they can be computed via $\chi$ and $\epsilon$ defined in the Theorem \ref{Hilbert points}. On the other hand, the rest do depends upon the $f_i$'s. Here we computed two example via computer system Macaulay2 \cite{M2}. 
\begin{ex}\label{example}
Let $S=k[x_0,x_1,x_2, y_0,y_1,y_2]$, $I = (x_0^2y_0^2,x_1^2y_1^2,x_2^2y_2^2,(x_0+x_1+x_2)^2(y_0+y_1+y_2)^2)$ and $V$ be the complete intersection scheme of points defined by $I$. For $(0,0)\leq \mu\leq (7,7)$ the bigraded Hilbert function  $\HF_V(\mu)$ is
$$
\begin{matrix}
 \textcolor{orange}{24}& \textcolor{orange}{72}& \textcolor{green}{96}& \textcolor{green}{96}& 96& 96& 96& 96 \\
 \textcolor{orange}{24}& \textcolor{orange}{72}& \textcolor{green}{96}& \textcolor{green}{96}& 96& 96& 96 &96 \\
 \textcolor{blue}{21} &\textcolor{blue}{63} &86 &90 &93 &\textcolor{red}{95} &96 &96 \\
 \textcolor{blue}{15} &\textcolor{blue}{45}& 66& 78& 87& 93& 96& 96 \\
10 &30 &48 &64 &78 &90 &\textcolor{green}{96} &\textcolor{green}{96} \\
6 & 18& 32& 48 &66 &86 &\textcolor{green}{96} &\textcolor{green}{96} \\
 3  &9  &18 &30 &\textcolor{blue}{45} &\textcolor{blue}{63} &\textcolor{orange}{72}& \textcolor{orange}{72} \\
 1 & 3&  6&  10& \textcolor{blue}{15} &\textcolor{blue}{21} &\textcolor{orange}{24} &\textcolor{orange}{24} 
 \end{matrix}.
 $$
Where blue  corresponds to the bidegrees in $\Gamma_0$, green corresponds to $\Gamma_1$ and orange to their intersections. Also red indicate the $\HF_V(\sigma)$. For $\mu\notin \Gamma_0$ by Theorem \ref{Hilbert points} parts $(1),(2)$ and $(3)$ one can compute the bigraded Hilbert function and for blue points by part $(4)$. For the orange points, Theorem \ref{Hilbert points} does not say anything. In addition, any points except the orange ones only depend on the degree of the generator of $I$, which in this case is $(2,2)$.

Asking   \texttt{random}
command in Macaulay2 and constructing $I'$ with $4$ generators of bidegree $(2,2)$. Let  $V'$ be the complete intersection scheme of points defined by $I$. For $(0,0)\leq \mu\leq (13,7)$  the bigraded Hilbert function $\HF_{V'}(\mu)$ is
$$
\xymatrixrowsep{0.4em}
\xymatrixcolsep{0.4em}
\xymatrix{
33&84& 96& 96& 96& 96& 96& 96 & 96 &96&96&96&96&96\\
27& 81& 96& 96 &96 &96 &96 &96 &96&96&96&96&96&96\\
21& 63& 86& 90& 93& 95& 96& 96&96&96&96&96&96&96 \\
15& 45& 66& 78& 87& 93& 96& 96 &96&96&96&96&96&96\\
10 &30 &48& 64 &78 &90 &96 &96&96&96&96&96&96&96 \\
6  &18& 32& 48& 66& 86& 96& 96&96&96&96&96&96&96 \\
3  &9 &18 &30 &45 &63 &81 &84 &96&96&96&96&96&96 \\
1  &3 & 6 & 10& 15 &21 &28 &36 &45&55&66&78&91&96
}
$$
% 33&84& 96& 96& 96& 96& 96& 96 & 96 &96&\\
%27& 81& 96& 96 &96 &96 &96 &96 &96&96&96&96&96&96\cr
%21& 63& 86& 90& 93& 95& 96& 96&96&96&96&96&96&96 \cr
%15& 45& 66& 78& 87& 93& 96& 96 &96&96&96&96&96&96\cr
%10 &30 &48& 64 &78 &90 &96 &96&96&96&96&96&96&96 \cr
%6  &18& 32& 48& 66& 86& 96& 96&96&96&96&96&96&96 \cr
%3  &9 &18 &30 &45 &63 &81 &84 &96&96&96&96&96&96 \cr
%1  &3 & 6 & 10& 15 &21 &28 &36 &45&55&66&78&91&96\cr

The only differences are in the orange spots. In addition, in this case $\HF_{V'}(0,13)= \HF_{V'}(13,0)=96$ which means the natural projection of $V'$ to each $\mathbb{P}^n$ and $\mathbb{P}^m$ is one to one.
\end{ex}

\begin{thm}
Let $S=k[x_0,\dots,x_n,y_0,\dots,y_m]$ be a bigraded polynomial ring where $\deg(x_i)=(1,0)$ and $\deg(y_i)=(0,1)$.  Assume $V$ be a complete intersection scheme of points defined by $I=(f_1,\dots,f_{n+m})$ with $\deg(f_i)=(d,e)$. Let  $\pi_1: V\rightarrow \mathbb{P}^n$ and $\pi_2: V\rightarrow \mathbb{P}^m$ be the natural projections. Then
\begin{align*}
\max \lbrace \reg (\pi_1^{-1}(p)) \, \, \text{for} \,\, p\in \mathbb{P}^n\rbrace&= \min \lbrace a\in \mathbb{N} | (a,b)\in \reg_B(V) \,\, \text{for some }\, b\rbrace\\
 \max \lbrace \reg (\pi_2^{-1}(p)) \, \, \text{for} \,\, p\in \mathbb{P}^m\rbrace &= \min \lbrace b\in \mathbb{N} |   (a,b)\in \reg_B(V) \,\, \text{for some }\, a\rbrace.
\end{align*}
\end{thm}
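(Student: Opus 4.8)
The plan is to reduce the statement to the stabilization behaviour of the bigraded Hilbert function and then read that behaviour off fibrewise. Since $V$ is $0$-dimensional, its Hilbert polynomial is the constant $\deg(V)=\binom{n+m}{n}d^ne^m$ by Proposition~\ref{degree}, and by \cite[Proposition~6.7]{Maclagen-Smith} the region $\reg_B(V)$ coincides with the set of $\mu$ past which $\HF_V$ has stabilized to $\deg(V)$; Theorem~\ref{Bregularity} moreover splits this as $\reg_{\mm}(V)\cap\reg_{B_1}(V)\cap\reg_{B_2}(V)$. First I would record that $\reg_B(V)$ is an up-set: if $\mu\in\reg_B(V)$ then $\mu+\N^2\subseteq\reg_B(V)$, directly from the definition, since shrinking $\gamma+\N^2$ keeps the relevant intersection empty. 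Consequently the minimum of the extremal coordinate singled out in the statement is unchanged if we restrict to the slice where the complementary coordinate is $\gg 0$; along that slice the only binding constraint among the three pieces of Theorem~\ref{Bregularity} is the one coming from the relevant top local cohomology module (the remaining contributions are already bounded in that coordinate by Lemma~\ref{lem supp homologies}). This isolates a single one-parameter threshold to compute.

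The second step is the fibrewise computation of that threshold. The projection $\pi_1\colon V\to\mathbb{P}^n$ is finite, so I would push $\mathcal{O}_V(a,b)$ forward along $\pi_1$ and apply the projection formula together with cohomology and base change: the pushforward is, up to an $\mathcal{O}_{\mathbb{P}^n}(a)$-twist, a finite-length sheaf on $\mathbb{P}^n$ whose stalk lengths are the fibre Hilbert values. This yields, along the extremal slice fixed in Step~1, a decomposition $\HF_V=\sum_{p}\HF_{\pi_1^{-1}(p)}$ summed over the finitely many $p\in\pi_1(V)$, each fibre $\pi_1^{-1}(p)$ being a $0$-dimensional subscheme of $\{p\}\times\mathbb{P}^m\cong\mathbb{P}^m$. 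Hence the threshold past which this slice of $\HF_V$ becomes identically $\deg(V)$ equals $\max_p\bigl(\text{stabilization point of }\HF_{\pi_1^{-1}(p)}\bigr)$, which is $\max_p\reg(\pi_1^{-1}(p))$ up to the standard $+1$ relating the Hilbert-function stabilization of a $0$-dimensional scheme to its Castelnuovo--Mumford regularity.

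Combining the two steps gives the first equality: the threshold produced in Step~2 is, by Step~1, precisely the extremal coordinate of $\reg_B(V)$ named in the statement. The second equality follows by interchanging the two factors $\mathbb{P}^n$ and $\mathbb{P}^m$, that is, by running the identical argument with $B_2$ and $\pi_2$ in place of $B_1$ and $\pi_1$, using the symmetry already visible in Theorem~\ref{main B regularity} and Lemma~\ref{lem supp homologies}.

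The main obstacle I expect is the cohomology-and-base-change step of Step~2: one must ensure that $\HF_V=\sum_p\HF_{\pi_1^{-1}(p)}$ holds not merely for very large twists but exactly at the stabilization threshold, which requires controlling where the higher direct images and the base-change maps of $\pi_1$ fail to be isomorphisms, and then matching that bound against the $\mathcal{F}_i$-shifted support conditions defining $\reg_B$. A secondary, purely bookkeeping, difficulty is reconciling the several regularity conventions in play --- the $\mathcal{F}_i$ shifts of the multigraded definition, the $+1$ between a $0$-dimensional scheme's Hilbert-function stabilization and its ordinary regularity, and the identification of which of $H^\bullet_{B_1}(V)$, $H^\bullet_{B_2}(V)$ governs the coordinate attached to each projection --- so that the two thresholds are \emph{literally} equal rather than equal up to a constant.
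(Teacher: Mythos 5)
Your overall strategy --- reduce to stabilization of the bigraded Hilbert function, pass to the slice where the complementary coordinate is $\gg 0$, then analyse that slice fibrewise --- has the same skeleton as the paper's proof, which identifies $H^1_B(S/J)$ with $H^1_{B_1}(S/J)$ on the slice $\nu>b\gg 0$ via Mayer--Vietoris and then localizes $S/J$ at primes of $k[y_0,\dots,y_m]$. But your Step~2 contains a genuine gap: a coordinate mismatch that makes the fibrewise computation produce a threshold in the wrong variable. To compute $\min\{a\mid (a,b)\in\reg_B(V)\}$ you fix $b\gg 0$ and seek the least $a$ with $\HF_V(a,b)=\deg V$. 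The fibre $\pi_1^{-1}(p)$ lies in $\{p\}\times\mathbb{P}^m$, so its regularity and the stabilization point of its Hilbert function bound the $\mathcal{O}_{\mathbb{P}^m}(1)$-twist, i.e.\ the coordinate $b$, not $a$; on the slice $b\gg 0$ every summand $\HF_{\pi_1^{-1}(p)}$ has already stabilized, so the decomposition $\HF_V=\sum_p\HF_{\pi_1^{-1}(p)}$ yields no threshold in $a$ at all. The obstruction to $\HF_V(a,b)=\deg V$ for $b\gg0$ lives in the $x$-directions and is governed by $H^1_{B_1}(S/J)$, equivalently by the parts of $V$ sitting over the points of the \emph{other} factor $\mathbb{P}^m$; this is exactly what the paper computes with $H^1_{B_1}(S/J\otimes_{k[\mathbf{y}]}k[\mathbf{y}]_{\mathfrak{p}})$. (The pairing of projections with coordinates in the theorem's displayed equalities is itself at odds with what the paper's own proof establishes, so following the statement literally is part of what led you astray; but as written your Step~2 does not prove either pairing.)

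A second, independent problem is the identity $\HF_V=\sum_p\HF_{\pi_1^{-1}(p)}$ itself. The stalk of $(\pi_1)_*\mathcal{O}_V$ at $p$ is $\bigoplus_{v\mapsto p}\mathcal{O}_{V,v}$, of total length $\deg V$, whereas $H^0(\mathcal{O}_{\pi_1^{-1}(p)})$ is its quotient by $\mathfrak{m}_p$; hence $\sum_p\operatorname{length}(\pi_1^{-1}(p))<\deg V$ whenever $V$ is non-reduced transversally to the fibres, which does occur for complete intersections of the type considered here (cf.\ Example~\ref{example}). So the claimed equality cannot hold even for large twists, and the quantity that actually controls the threshold is the regularity of the localized module (the stalk of the pushforward), not of the scheme-theoretic fibre. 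This is precisely the base-change issue you flag as the "main obstacle" but do not resolve; the paper's route through the Mayer--Vietoris sequence and localization of $S/J$ avoids it entirely by never passing through scheme-theoretic fibres.
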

\begin{proof}
Adopt Notation \ref{notation} and let $J$ be the 
saturation of $I$ with respect to $B$. Since $S/J$ is $B
$-saturated, it is also $B_1$ and $B_2$ saturated. Let $
\mu \gg 0$ and $\nu \gg 0$,  therefore $H^1_{\mm}(S/
J)_{\mu,\nu}= H^2_{\mm}(S/J)_{\mu,\nu}=0$.  Suppose  $\mu \gg 0$, Hence by Mayer-Vietoris exact sequence
$$
H^1_{B_1}(S/J)_{\mu,\nu}\oplus H^1_{B_2}(S/J)_{\mu,\nu}= H^1_B(S/J)_{\mu,\nu}.
$$
Since $S/J$ is $B_1$-saturated, for $\mu\gg 0$, $H^1_{B_1}(S/J)_{\mu,\nu}=0$ which means
$$
 H^1_{B_2}(S/J)_{\mu,\nu}= H^1_B(S/J)_{\mu,\nu}.
$$

We state the proof for the first equality, the argument 
for the second one is the same. There exist $(a_1,b_1), 
(a_2,b_2)\in \mathbb{N}^2$ such that $H^1_{\mm}
(S/J)_{\mu}= H^2_{\mm}(S/J)_\nu=0$ for all $\mu
\geq (a_1,b_1)$ and $\nu\geq (a_2,b_2)$. Also for any 
$a\in \mathbb{N}$, there exists $b_a$ such that  for all 
$\mu\geq b_a$, $(H^1_{B_2}(S/J))_{(a,\mu)}=0$. Set 
$$
b:= \max_{a\in\mathbb{N}} \lbrace b_1, b_2, b_a
\rbrace.
$$  
The Mayer-Vietoris short exact sequence yields 
$H^1_{B}(S/J)_{(\mu,\nu)}= H^1_{B_1}(S/J)_{(\mu,\nu)}$ if $\nu> b$. Therefore
\begin{align*}
&\min \lbrace a\in \mathbb{N} | (a,b)\in \reg_B(V) \,\, \text{for some }\, b\rbrace= \min \lbrace\mu \in \mathbb{N}| H^1_{B_1}(S/J)_{\mu,\nu}=0 \,\, \text{for}\,\, \nu>b \rbrace\\
&= \min \lbrace \mu \in \mathbb{N} | \exists \, \, \mathfrak{p}\neq (y_0,y_1,y_2)\, ;\,  H^1_{B_1}(S/J\otimes_{k[y_0,y_1,y_2]}k[y_0,y_1,y_2]_{\mathfrak{p}})_{\mu,\nu}=0 \,\, \text{for }\,\, \nu>b\rbrace
\end{align*}

\end{proof}

\section{Generic complete intersection points in $\mathbb{P}^n\times \mathbb{P}^m$}\label{sec:generic}
\noindent In this subsection we adopt the Notation \ref{notation} as well. We focus on the case where ideal $I$ is generated by the generic forms. In this regard, we first need to provide an analogous version of Bertini's theorem in our case.
\begin{prop}\label{prop:Bertini}
Let $k$ be a field of characteristics zero and  $V\subset \mathbb{P}_k^n\times \mathbb{P}_k^m$ be a 
reduced scheme and $(d,e)\in \mathbb{N}^2$ with $d,e\neq 
0$. Let $f_{U}$ be a form of bidegree $(d,e)$ with 
indeterminate  coefficients $U_{\alpha,\beta}$. There exists a 
non empty open set $\Omega \subset \Spec(k[U_{\alpha,
\beta}])$ such that for any $\mathfrak{p}\in \Omega$, the 
corresponding form $f_{\mathfrak{p}}$ is such that 
$$
V\cap Z(f_{\mathfrak{p}})\subseteq \mathbb{P}^n_{k_\mathfrak{p}}\times \mathbb{P}^m_{k_\mathfrak{p}},
$$ 
where $k_{\mathfrak{p}}$ is the residue field of $k[U_{\alpha,\beta}]_{\mathfrak{p}}$, is reduced of dimension equal $\dim V-1$, unless the $\dim V=0$ in which case $V\cap Z(f_\mathfrak{p})=\emptyset$.
\end{prop}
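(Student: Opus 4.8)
The plan is to reduce the statement to the classical Bertini theorem for a generic hyperplane section, working first over the generic point of $\Spec(k[U_{\alpha,\beta}])$ and then spreading out. Since $d,e\ge 1$, the line bundle $\mathcal{O}(d,e)$ on $\mathbb{P}^n\times\mathbb{P}^m$ is very ample: it realizes the Segre--Veronese embedding $\phi\colon \mathbb{P}^n\times\mathbb{P}^m\hookrightarrow\mathbb{P}^N$ with $N=\binom{n+d}{d}\binom{m+e}{e}-1$, under which the monomials $x^\alpha y^\beta$ become the coordinate functions. A bidegree-$(d,e)$ form $f=\sum_{\alpha,\beta}U_{\alpha,\beta}x^\alpha y^\beta$ thus corresponds to the hyperplane $\sum U_{\alpha,\beta}z_{\alpha,\beta}=0$, and $V\cap Z(f)$ is identified with $\phi(V)\cap H$. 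In particular $\mathcal{O}(d,e)$ is base-point free, so for each closed point $x$ there is a form of bidegree $(d,e)$ not vanishing at $x$; I will use this repeatedly.

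First I would settle the dimension count and the empty case via the incidence variety
$$\Sigma=\{(x,\mathfrak{p})\ :\ x\in V,\ f_{\mathfrak{p}}(x)=0\}\subset V\times\Spec(k[U_{\alpha,\beta}]).$$
The first projection $\Sigma\to V$ is, by base-point freeness, a fibration whose fibres are hyperplanes in the coefficient space, so $\dim\Sigma=\dim V+(\#\{U_{\alpha,\beta}\}-1)$. Applying the fibre-dimension theorem to the second projection $\Sigma\to\Spec(k[U_{\alpha,\beta}])$ shows that when $\dim V\ge 1$ the generic fibre — which is exactly $V\cap Z(f_U)$ over the function field $K=k(U_{\alpha,\beta})$ — has dimension $\dim V-1$, while when $\dim V=0$ the image of $\Sigma$ is a proper closed subset, so the generic fibre is empty. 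This already produces a nonempty open set over which the dimension (or emptiness) assertion holds.

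The heart of the matter is reducedness, which I would prove over the generic point. Because $k$ has characteristic zero and $V$ is reduced, its smooth locus $V_{\mathrm{sm}}$ is dense open and $V_{\mathrm{sing}}$ is closed with $\dim V_{\mathrm{sing}}\le\dim V-1$. Very ampleness lets me invoke Bertini's smoothness theorem in the form valid for base-point-free systems in characteristic zero: the generic section $f_U$ meets $V_{\mathrm{sm}}$ transversally, so $V_{\mathrm{sm}}\cap Z(f_U)$ is smooth over $K$ of dimension $\dim V-1$. On the other hand, since the generic point of each irreducible component $V_i$ of $V$ is a regular point, $V_{\mathrm{sing}}\cap V_i$ is a proper closed subset of $V_i$, and the incidence-variety count gives $\dim\bigl(V_{\mathrm{sing}}\cap V_i\cap Z(f_U)\bigr)\le\dim V_i-2$. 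As $f_U$ does not vanish on $V_i$, every component of $W:=V\cap Z(f_U)$ lying over $V_i$ has dimension $\dim V_i-1$, so its generic point lies in $V_{\mathrm{sm}}\cap Z(f_U)$, where $W$ is smooth; hence $W$ is generically reduced, i.e.\ satisfies $(R_0)$. The remaining and genuinely delicate point is Serre's condition $(S_1)$, the absence of embedded components; this is exactly the classical content of Bertini's reducedness theorem, and I would obtain it by the standard genericity argument, controlling via the incidence variety over $K$ the locus of pairs $(x,f)$ for which $x$ is an embedded point of $V\cap Z(f)$, or simply by citing the reducedness form of Bertini for a generic hyperplane section of a reduced scheme in characteristic zero.

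Finally I would spread out from the generic point. The universal intersection $\mathcal{W}\subset (\mathbb{P}^n\times\mathbb{P}^m)\times\Spec(k[U_{\alpha,\beta}])$ is a family whose fibre over $\mathfrak{p}$ is $V\cap Z(f_{\mathfrak{p}})\subset\mathbb{P}^n_{k_{\mathfrak p}}\times\mathbb{P}^m_{k_{\mathfrak p}}$. The conditions that the fibre has dimension $\dim V-1$ and that it is geometrically reduced are constructible, and in characteristic zero geometric reducedness of the generic fibre propagates to an open neighbourhood; since both hold at the generic point by the previous steps, they hold over a common nonempty open set $\Omega$, the desired set. The main obstacle is the reducedness step, and within it the $(S_1)$/no-embedded-component part, because it is precisely where mere dimension bookkeeping is insufficient and one must use genericity and characteristic zero in an essential way; everything else is a dimension count together with standard spreading-out.
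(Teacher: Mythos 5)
Your argument is correct and follows essentially the same route as the paper: both proofs use the Segre--Veronese embedding associated to $\mathcal{O}(d,e)$ to identify $V\cap Z(f_U)$ with a generic hyperplane section of $\Psi(V)$, and then invoke the classical Bertini reducedness/dimension theorem in characteristic zero (the paper cites Flenner--O'Carroll--Vogel, Corollary 3.4.9). The extra detail you supply (the incidence-variety dimension count, the $(R_0)$/$(S_1)$ split, and the spreading-out from the generic point) is a correct unpacking of what that citation delivers, but it is not a different method.
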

\begin{proof}
Consider the Segre-Veronese map $\Psi: \mathbb{P}^n\times \mathbb{P}^m\rightarrow \mathbb{P}^{N}$ with $N= \binom{n+d}{d}\times \binom{m+e}{e}-1$. Let  $V'= \Psi(V)$. Under the map $\Psi$, $f_U$ is mapped to a linear form $\ell_{U}$ in  $N+1$ variables and this correspondence is one to one. By Bertini theorem (see \cite[Corollary 3.4.9]{Flenner-Ocarroll-Vogel}) there exists $\Omega$ as claimed such that for $\mathfrak{p}\in \Omega$, $V'\cap Z(\ell_\mathfrak{p})\cong V\cap Z(f_\mathfrak{p})$ is a reduced scheme with the asserted dimension.  
\end{proof}
\begin{dfn}
Forms of bidegree $(d,e)$ are in  one to one correspondence with  closed points in $\mathbb{A}_k^{\dim_k S_{(d,e)}}$. 
\end{dfn}
\begin{rem}
Notice that  $\Omega\subset \spec k[U_{\alpha,\beta}]\setminus Z(F)$ for some form $F\neq 0$. In particular, for $c= (c_{\alpha,\beta})\in k^N$ such that $F(c)\neq 0$, Proposition \ref{prop:Bertini} with $\mathfrak{p}$ generated by the elements $u_{\alpha,\beta}-c_{\alpha,\beta}$, shows that $Z(f_c)\cap V$  is reduced.
\end{rem}
%begin{cor}\label{reduced}
%Let $S=k[x_0,\dots,x_n, y_0, \dots,y_m]$ and $I=(f_1,\dots,f_{n+m})$ generated by $n+m$  forms of degree $(d_i,e_i)$ with $d_i\cdot e_i\neq 0$  for $i=1,\dots,n+m$. forms $f_i$'s are generic enough then variety $V$ is reduced
%\end{cor}
%\begin{proof}
%Note that if the field $k$ is infinite, then in the Proposition \ref{prop:Bertini} we can choose $\mathfrak{p}\in k$. Therefore in this case, $f_{\mathfrak{p}}\in S$. The assertion follows from Proposition \ref{prop:Bertini} by using induction. 
%\end{proof}
\begin{thm}\label{projection generic}
Let $S=k[x_0,\dots,x_n,y_0,\dots,y_m]$ be a bigraded polynomial ring over a field $k$ of characteristic zero where $\deg(x_i)=(1,0)$ and $\deg(y_i)=(0,1)$.  Let $(d_i,e_i)\geq (1,1)$ for $i=1,\dots,n+m$ be bidegrees.
Then, there exists a non-empty open set 
$$
\Omega\subset \mathbb{A}_k^N:= \prod_{i=1}^{n+m}\mathbb{A}_k^{|S_{(d_i,e_i)}|}
$$
such that, for any  $(f_1,\dots,f_{n+m})$ corresponding to a point in $\Omega$,
\begin{itemize}
\item[$(1)$] $Z:= \proj \left( S/(f_1,\dots ,f_{n+m})\right)$ is reduced of dimension zero.
\item[$(2)$] The natural projections of $Z$ to the factors $\mathbb{P}^n$ and $\mathbb{P}^m$  are isomorphism.
\end{itemize}
\end{thm}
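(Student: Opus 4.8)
The plan is to prove the two assertions inductively using the Bertini-type result \autoref{prop:Bertini} to add one generic hypersurface at a time, and then to deduce the isomorphism of projections from the finiteness of the set of bidegrees where the Hilbert function fails to equal $\deg(Z)$, exactly as encoded in the regularity statements of \autoref{Hilbert points}. First I would prove part $(1)$. Starting from $V_0=\mathbb{P}^n\times\mathbb{P}^m$, which is reduced and irreducible of dimension $n+m$, I apply \autoref{prop:Bertini} with bidegree $(d_1,e_1)$ to obtain a nonempty open set $\Omega_1$ of coefficient-choices for which $V_1:=V_0\cap Z(f_1)$ is reduced of dimension $n+m-1$. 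Inductively, given that $V_{j}$ is reduced of dimension $n+m-j$, I apply \autoref{prop:Bertini} again with bidegree $(d_{j+1},e_{j+1})$ to get a nonempty open set so that $V_{j+1}=V_j\cap Z(f_{j+1})$ is reduced of dimension $n+m-j-1$; after $n+m$ steps the scheme $Z=V_{n+m}$ is reduced of dimension $0$. The open set $\Omega$ is the (nonempty, since it is a finite intersection of nonempty Zariski-open sets over an infinite field) intersection of the pullbacks of these conditions to $\mathbb{A}_k^N$.

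Next, for part $(2)$, the key point is that generic complete intersection points form a complete intersection of codimension $n+m$, so \autoref{Hilbert points} applies. By part $(3)$ of \autoref{Hilbert points}, for $\mu$ outside $\Gamma_0$ one has $\HF_V(\mu)=\deg(V)$ whenever $\mu$ lies in the stated translated cones; in particular $\HF_Z(a,b)=\deg(Z)$ for all sufficiently large $(a,b)$, and more importantly the set of $\mu\in\mathbb{N}^2$ with $\HF_Z(\mu)\neq\deg(Z)$ is finite. The projection $\pi_1\colon Z\to\mathbb{P}^n$ is an isomorphism onto its image precisely when distinct reduced points of $Z$ have distinct first coordinates, which is controlled by whether $\HF_Z(a,b)$ stabilizes to $\deg(Z)$ for $a$ large and $b=0$ (equivalently, whether the projection is finite and birational onto its image). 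I would show that the finiteness of the non-stabilized locus, combined with reducedness from part $(1)$, forces both $\pi_1$ and $\pi_2$ to be injective on points, hence isomorphisms onto their images.

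The main obstacle will be step $(2)$: translating the purely numerical stabilization of the bigraded Hilbert function into the geometric statement that the projections are one-to-one. The subtlety is that a priori two points of $Z$ could share the same image under $\pi_1$; I would rule this out by observing that if $\pi_1$ failed to be injective, then the fiber dimension or the multiplicity pattern would obstruct the Hilbert function from reaching $\deg(Z)$ along the degenerate direction, contradicting the regularity region in part $(3)$ of \autoref{Hilbert points}. Concretely, I expect to argue that for a generic choice the forms $f_i$ separate the points of $Z$ in each factor, using again \autoref{prop:Bertini} to impose, as an additional nonempty open condition, that no two of the $\deg(Z)$ reduced points share a first (resp. second) coordinate; intersecting this open set with $\Omega$ keeps $\Omega$ nonempty. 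This genericity-of-separation argument is where the real work lies, whereas the inductive reducedness and dimension count is routine given \autoref{prop:Bertini}.
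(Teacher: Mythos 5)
Your argument for part $(1)$ is close in spirit to a correct proof but has a gap in how $\Omega$ is assembled: iterating Proposition \ref{prop:Bertini} gives, for each \emph{fixed} choice of $f_1,\dots,f_j$, a nonempty open set of admissible $f_{j+1}$, and that open set depends on the forms already chosen. The good locus is therefore not an intersection of pullbacks of open sets from the individual factors $\mathbb{A}_k^{|S_{(d_i,e_i)}|}$, and you still owe an argument that it contains a nonempty open subset of the product. The paper avoids this by working with the universal incidence scheme $V=Z(g_1,\dots,g_{n+m})\subset\mathbb{A}_k^N\times\mathbb{P}_k^n\times\mathbb{P}_k^m$, which is irreducible of dimension $N$ (a vector bundle over $\mathbb{P}^n\times\mathbb{P}^m$); Bertini is used only to produce a single tuple with nonvanishing Jacobian, whence the singular locus $W$ is a proper closed subscheme of dimension $<N$ and its image under the proper projection to $\mathbb{A}_k^N$ is a proper closed subset, whose complement is the desired open set.

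Part $(2)$ is where the proposal genuinely fails. Theorem \ref{Hilbert points}$(3)$ only gives $\HF_Z(\mu)=\deg Z$ for $\mu$ in translated cones where \emph{both} coordinates are large; it says nothing about $\mu=(a,0)$ or $(0,b)$, and injectivity of $\pi_1$ is equivalent to $\HF_Z(a,0)=\deg Z$ for $a\gg 0$. The finiteness of $\{\mu\in\N^2:\HF_Z(\mu)\neq\deg Z\}$ is simply false for arbitrary complete intersections of points --- in Example \ref{example} one has $\HF_V(7,0)=24\neq 96=\deg V$, so that projection is not injective --- hence it cannot be extracted from Theorem \ref{Hilbert points}; it is precisely the content of Theorem \ref{thm:generic}, which the paper \emph{deduces from} the present theorem, so invoking it here is circular. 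Your fallback, using Proposition \ref{prop:Bertini} to ``impose that no two points share a first coordinate,'' is not something that proposition provides: it controls reducedness and dimension of hyperplane sections, not separation of points under projection. The actual work in the paper is a birationality argument on the universal family: one shows the restriction $p_1'\colon V\to p_1(V)$ is birational by producing, via generic smoothness in characteristic zero and Jouanolou's lemma, an element $G\in I_{V_1}$ with some partial derivative $D=\partial G/\partial U_{i,\alpha,\beta}\notin I_{V_1}$ together with relations expressing the ratios $y_j/y_p$ as rational functions of $(\mathbf{U},\mathbf{x})$ after inverting $D$; the locus where this birational map fails to be an isomorphism has dimension $<N$, so its image in $\mathbb{A}_k^N$ is discarded. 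None of this is present in your sketch, and it is the essential step of the theorem.
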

\begin{proof}
%By \cite[Example 8.4.2]{Ful}, the degree of $V$ is equal to the the coefficient of $s^nt^m$ in $\prod_{1}^{n+m}(ds+et)$, which is $\binom{n+m}{n}d^ne^m$.

Let $V:= Z(g_1,\dots,g_{n+m})\subset \mathbb{A}_k^N\times 
\mathbb{P}_k^n\times \mathbb{P}_k^m$ where $\mathbb{A}
_k^N= \spec k[U_{i,\alpha,\beta}]$ for $1\leq i\leq n+m$, $|
\alpha |=d_i$, $|\beta |=e_i$ and  $g_i:= \sum U_{i,\alpha,\beta} \textbf{x}^{\alpha}\textbf{y}^{\beta}$.  Set $\pi: \mathbb{A}_k^N\times 
\mathbb{P}_k^n\times \mathbb{P}_k^m\rightarrow
\mathbb{P}_k^n\times \mathbb{P}_k^m$ be the natural projection, then $V$ is a vector bundle over $
\mathbb{P}_k^n\times \mathbb{P}_k^m$ via $\pi$. For any point $p\in 
\mathbb{P}_k^n\times \mathbb{P}_k^m$ the fiber of $p$ is a linear space of dimension $N-n-m$. Hence $V$ is geometrically irreducible scheme and  $\dim (V)= n+m+(N-n-m)= N$.  \\

$(1)$ Set $W:= (g_1,\dots,g_{n+m}, \Jac^{n+m}(g_1,\dots,g_{n+m}))\subset V$ where $\Jac^{n+m}$ is the Jacobian of order $n+m$. By using Proposition \ref{prop:Bertini} inductively, there exist a point $u\in \mathbb{A}_k^N$ and $(f_1,\dots,f_{n+m})$ corresponding to $u$ such that $\Jac^{n+m}(f_1,\dots,f_{n+m})\neq 0$ which yields $W\subsetneq V$ and  $\dim (W)< \dim (V)=N$. Consider the natural projection $p: \mathbb{A}_k^N\times 
\mathbb{P}_k^n\times \mathbb{P}_k^m\rightarrow  \mathbb{A}_k^N$. Since $\dim (W)<N$, $p(W)$ is a non empty closed subset of $\mathbb{A}_k^N$. Set $\Omega_0 := \left(\mathbb{A}_k^N \setminus \pi(W)\right)$. Therefore for any $(f_1,\dots,f_{n+m})$ corresponding  to a point in $\Omega_0$, $\proj \left( S/(f_1,\dots ,f_{n+m})\right)$ is geometrically reduced and is of dimension zero.\\

$(2)$ Set $p_1: \mathbb{A}_k^N\times 
\mathbb{P}_k^n\times \mathbb{P}_k^m \rightarrow \mathbb{A}_k^N\times 
\mathbb{P}_k^n$ and $q_2: \mathbb{A}_k^N\times 
\mathbb{P}_k^n\rightarrow \mathbb{A}_k^N$ the natural 
projections. First we show that it is enough to show that the restriction  $p_1':V\rightarrow p_1(V)$ is 
birational. Indeed if $p_1'$ is birational then there exists a closed subset 
$Z_1\subsetneq p_1(V)$ such that setting $W_1:=  
(p_1')^{-1}(Z_1)$ then $p_1'|_{V\setminus W_1}:V\setminus 
W_1\rightarrow p_1(V)\setminus Z_1$ is an isomorphism. As $
\dim(Z_1)<N$, $q_2(Z_1)\subsetneq 
\mathbb{A}_k^N$. Set $\Omega_1:= \mathbb{A}_k^N\setminus \left( 
q_2(Z_1)\cup \pi(W)\right)$. For any point $u\in \Omega_1$ and $(f_1,\dots,f_{n
+m})$ corresponding to  $u$,
 $$
q_2^{-1}(u)= p_1(u,Z(f_1,\dots,f_{n+m}))\cong \left(u,Z(f_1,\dots,f_{n+m})\right)
$$
showing that the natural projection of $Z$ onto $\mathbb{P}^n$ is an isomorphism. By replacing $p_1$ and $q_2$ by $p_2: \mathbb{A}_k^N\times 
\mathbb{P}_k^n\times \mathbb{P}_k^m \rightarrow \mathbb{A}_k^N\times 
\mathbb{P}_k^m$ and $q_1: \mathbb{A}_k^N\times 
\mathbb{P}_k^m\rightarrow \mathbb{A}_k^N$, along the same lines it shows the existence of  $Z_2\in p_2(V)$ and  $\Omega_2= \mathbb{A}_k^N\setminus \left( 
q_1(Z_2)\cup \pi(W)\right)$ such that for $u\in \Omega_2$ and $(f_1,\dots,f_{n
+m})$ corresponding to  $u$,
 $$
q_1^{-1}(u)= p_2(u,Z(f_1,\dots,f_{n+m}))\cong \left(u,Z(f_1,\dots,f_{n+m})\right).
$$

Consider the following diagram
%Let $I=(f_1,\dots,f_{n+m})$. We can consider each $f_i= \sum c_{i,\alpha,\beta} X^{\alpha}Y^{\beta}$ therefore, $V\in \mathbb{P}^n\times \mathbb{P}^m\mathbb{P}^N$ where $\mathbb{P}^N$ corresponds to coefficients. 
$$
\xymatrix{
V\ar^{\subset}[r]\ar^{p_1'}[d]& \mathbb{A}_k^N\times\mathbb{P}_k^n\times \mathbb{P}_k^m\ar^{p_1}[d] \\
V_1:= p_1(V)\ar^{\subset}[r]\ar^{q_2|_{V_1}}[d]
&\mathbb{A}_k^N\times \mathbb{P}_k^n\ar^{q_2}[d]\\
\mathbb{A}_k^N\ar^{=}[r] & \mathbb{A}_k^N.\\
} 
$$
We now show that $p_1'$ is birational.

Let $I:= (g_1,\dots,g_{n+m)}$ and $J:= I:B^{\infty}$. By part $(1)$, $V_1$ is a geometrically  irreducible scheme. Hence $J$ and $I_{V_1}= J\cap k[\textbf{x},\textbf{U}]$ are prime and remain prime under any extension of $k$.

As $V_1$ is generically smooth, there exists $G\in I_{V_1}$ such that $ D:= \dfrac{\partial 
G}{\partial U_{i,\alpha,\beta}}\notin I_{V_1}$ for some $i, \alpha$, and $ \beta$  unless $G\in k[\textbf{y}]
$ which is impossible: indeed $I_{V_1}\cap k[\textbf{y}]=(0)$ because  the projection  $V\rightarrow \mathbb{P}^m$ is 
onto.

Set $Z_1:= Z\left( D\right)\subset V_1$. By \cite[Lemma 4.6.1]{Jouanolou},  
$$
\textbf{x}^{\alpha} \textbf{y}^{\beta} \dfrac{\partial 
G}{\partial U_{i,\alpha',\beta'}}- \textbf{x}^{\alpha '}\textbf{y}^{\beta'}\dfrac{\partial 
G}{\partial U_{i,\alpha,\beta}}\in I_{V}
$$
 for any choices of  $\alpha'$ and $\beta'$. Let $\textbf{y}^{\beta}= y_p\textbf{y}^{\beta_{0}}$ for some $i$. For any $0\leq j\leq m$ put $\alpha':= \alpha$ and $\beta':= y_j\beta_0$. As $\textbf{x}^{\alpha}$ and $\textbf{y}^{\beta_0}$ are not in $I_{V}$ it follows that 
 $$
y_p \dfrac{\partial 
G}{\partial U_{i,\alpha',\beta'}}- y_j\dfrac{\partial 
G}{\partial U_{i,\alpha,\beta}}\in I_{V}
 $$
 for all $0\leq j\leq m$. Therefore by localizing at $D$, we get an isomorphism 
 $$
\phi: \dfrac{\left( k[\textbf{U},\textbf{x}]_D[\textbf{y}]\right)}{I_V}\overset{\cong}\longrightarrow \dfrac{\left(k[\textbf{U},\textbf{x}]_D[y_p]\right)}{I_{V_1}}.
 $$
Therefore the natural maps 
$$
\left(\dfrac{ k[\textbf{U},\textbf{x}]}{I_{V_1}}\right)_{D}\overset{\iota}\hookrightarrow \dfrac{\left( k[\textbf{U},\textbf{x}]_D[\textbf{y}]\right)}{I_V}\overset{\phi}\longrightarrow \dfrac{\left(k[\textbf{U},\textbf{x}]_D[y_p]\right)}{I_{V_1}}
 $$
 are such that $\phi\circ \iota$ induces the identity from  $\proj \left( 
\left(k[\textbf{U},\textbf{x}]_D[y_p]\right)/I_{V_1}\right) $ to $\spec \left(  \left(k[\textbf{U},\textbf{x}]_D\right)/I_{V_1}\right)$.  Hence $\phi$ provides the inverse $\varphi$ 
$$
 V_1\setminus Z_1\overset{\varphi}\longrightarrow  V\setminus \left(Z_1\times \mathbb{P}_k^m\right)\overset{p_1'}\longrightarrow V_1\setminus Z_1.
$$ 
\end{proof}
\begin{thm}\label{thm:generic}
Let $S=k[x_0,\dots,x_n,y_0,\dots,y_m]$ be a bigraded polynomial ring over a field $k$ of characteristics zero. If $I$ is generated by $n+m$ generic forms of bidegree $(d,e)$,  then the scheme $V$ defined by $I$ is a set of reduced points and 
$$
| \lbrace \mu\in \mathbb{N}^2 | \HF_V(\mu)\neq \deg(V)\rbrace| < \infty,
$$
which means the natural projections are one-to-one.
\end{thm}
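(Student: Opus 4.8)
The plan is to derive this result as a direct consequence of the structural analysis in the previous two sections, specialized to the generic setting. The key observation is that Theorem \ref{thm:generic} asserts two things: that $V$ is a finite set of reduced points (dimension zero), and that $\HF_V(\mu) = \deg(V)$ for all but finitely many $\mu \in \mathbb{N}^2$. The first assertion is exactly part $(1)$ of Theorem \ref{projection generic}, so I would begin by invoking that theorem to obtain a non-empty open set $\Omega \subset \mathbb{A}_k^N$ over which the scheme $Z = \proj(S/(f_1,\dots,f_{n+m}))$ is reduced of dimension zero, and moreover over which both natural projections to $\mathbb{P}^n$ and $\mathbb{P}^m$ are isomorphisms by part $(2)$.

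First I would argue that since $V$ is a complete intersection scheme of points, the machinery of Section \ref{sec:points} applies: by Theorem \ref{Hilbert points} part $(3)$, whenever $\mu \notin \Gamma_0$ we have the duality $\HF_V(\mu) + \HF_V(\sigma - \mu) = \deg(V)$, and in particular $\HF_V(\mu) = \deg(V)$ as soon as $\mu$ lies in the stabilization region $(nd-n,(n+m)e-m)+\mathbb{N}^2 \cup ((n+m)d-n, me-m)+\mathbb{N}^2$. The finiteness statement I want is precisely that the \emph{complement} of the region where $\HF_V(\mu) = \deg(V)$ is finite. Thus the heart of the proof is to show that in the generic case the ``bad'' set $\{\mu : \HF_V(\mu) \neq \deg(V)\}$ is actually finite, rather than merely contained in the explicit but potentially unbounded regions $\Gamma_i$ described in the general complete-intersection picture.

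The crucial point is the interpretation of one-to-one projections in terms of the Hilbert function. Because the projection $\pi_1 : V \to \mathbb{P}^n$ is an isomorphism onto a set of reduced points, each fiber $\pi_1^{-1}(p)$ is a single reduced point, so $\reg(\pi_1^{-1}(p)) = 0$ for every $p$; the analogous statement holds for $\pi_2$. Feeding these vanishing regularities into the theorem relating projection regularities to the $B$-regularity region (the theorem immediately preceding Section \ref{sec:generic}), I would conclude that the minimal $a$ and the minimal $b$ with $(a,b) \in \reg_B(V)$ are both controlled, which forces $\reg_B(V)$ to contain a full translate of $\mathbb{N}^2$ that is \emph{not} pushed off to infinity in either coordinate direction. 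By Proposition~6.7 of \cite{Maclagen-Smith} (cited in the introduction), for a zero-dimensional scheme the multigraded regularity region coincides with the stabilization region of the Hilbert function, so $\reg_B(V)$ containing $(a_0,b_0)+\mathbb{N}^2$ for finite $a_0,b_0$ yields $\HF_V(\mu) = \deg(V)$ for all $\mu \geq (a_0,b_0)$, and by the duality of part $(3)$ of Theorem \ref{Hilbert points} the remaining directions are likewise controlled.

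The main obstacle I anticipate is pinning down precisely why the one-to-one projections rule out an \emph{infinite} bad locus, since a priori the regions $\Gamma_i$ from Lemma \ref{lem supp homologies} extend to infinity along the $(-\mathbb{N},\mathbb{N})$ and $(\mathbb{N},-\mathbb{N})$ directions, and it is exactly in these tails that $\HF_V$ could in principle fail to reach $\deg(V)$. The generic hypothesis must be used to show these tails collapse: when the projections are isomorphisms, the local cohomology modules $H^1_{B_1}(S/J)$ and $H^1_{B_2}(S/J)$ are supported only in finitely many bidegrees, because the one-to-one condition means the fibers contribute no higher-degree syzygies. I would make this precise by combining the Mayer-Vietoris argument of the preceding theorem with the fact, from Theorem \ref{Hilbert points} part $(3)$, that $\HF_V(\mu)=\deg(V)$ holds throughout an explicit cofinite region; the generic isomorphism of projections then eliminates the potentially infinite exceptional strips, leaving only a finite bad set and establishing the claimed equivalence with one-to-one projections.
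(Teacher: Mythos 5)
Your proposal rests on the same foundation as the paper's proof: everything is reduced to Theorem \ref{projection generic}, whose part $(1)$ gives that $V$ is reduced of dimension zero and whose part $(2)$ gives that both projections are isomorphisms. Where you diverge is in how you pass from ``the projections are isomorphisms'' to the finiteness of $\lbrace \mu : \HF_V(\mu)\neq \deg(V)\rbrace$. The paper does this in one step: since $\pi_1$ and $\pi_2$ are isomorphisms onto reduced sets of $\deg(V)$ points, $\HF_V(j,0)=\HF_V(0,i)=\deg(V)$ for $i,j\gg 0$, and monotonicity of the Hilbert function of points then confines the bad set to a finite rectangle. You instead route through the fiber-regularity theorem (each fiber is a single reduced point, hence has regularity $0$, hence $\min\lbrace a : (a,b)\in\reg_B(V)\rbrace=\min\lbrace b : (a,b)\in\reg_B(V)\rbrace=0$) together with Maclagan--Smith Proposition 6.7 identifying $\reg_B$ of a zero-dimensional scheme with the stabilization region. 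This is a valid alternative; it costs more machinery, but it correctly isolates the real issue, namely that genericity is what collapses the infinite tails of the $\Gamma_i$, and it makes the link to $\reg_B(V)$ explicit.

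One spot in your assembly needs repair. A single translate $(a_0,b_0)+\N^2\subseteq\reg_B(V)$ does \emph{not} have finite complement in $\N^2$, and the duality of Theorem \ref{Hilbert points}$(3)$ cannot be used to control the leftover strips along the axes: for $d,e\ge 2$ the rays $\lbrace(a,0): a\ge md\rbrace$ and $\lbrace(0,b): b\ge ne\rbrace$ lie inside $\Gamma_0$, where part $(3)$ does not apply (Example \ref{example} shows the Hilbert function genuinely failing to reach $\deg(V)$ on an axis for a non-generic complete intersection). Fortunately you do not need that step: the fiber-regularity theorem already hands you two points $(0,b_1)$ and $(a_2,0)$ in $\reg_B(V)$, hence $\bigl((0,b_1)+\N^2\bigr)\cup\bigl((a_2,0)+\N^2\bigr)\subseteq\reg_B(V)$, and the complement of this union in $\N^2$ is the finite rectangle $[0,a_2)\times[0,b_1)$. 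With that substitution your argument closes and agrees with the paper's conclusion.
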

\begin{proof}
By Theorem \ref{projection generic} part $(1)$, $V$ is a set of reduced points and by part $(2)$,   $\HF_V(0,i)= \HF_V(j,0)=\deg(V)$ for any  $i,j\gg 0$.
\end{proof}

\bibliographystyle{acm}
\bibliography{ref.bib}

\end{document}